\newcommand{\B}{\mathbf B}
\newcommand{\Bm}{\mathbf B_-}
\newcommand{\bI}{\mathbf I}
\newcommand{\cO}{\mathcal O}
\renewcommand{\P}{\mathbb P}
\newcommand{\Q}{\mathbb Q}
\newcommand{\R}{\mathbb R}
\newcommand{\cX}{\mathcal X}
\newcommand{\rat}{\dashrightarrow}
\newcommand{\abs}[1]{\left\lvert #1 \right\rvert}
\newcommand{\set}[1]{\left\{ #1 \right\}}
\newcommand{\floor}[1]{\left\lfloor #1 \right\rfloor}
\newtheorem{theorem}{Theorem}[section]
\newtheorem{corollary}[theorem]{Corollary}
\newtheorem{lemma}[theorem]{Lemma}
\theoremstyle{definition}
\newtheorem{definition}[theorem]{Definition}
\newtheorem{remark}[theorem]{Remark}
\newtheorem*{remark*}{Remark}
\newtheorem*{proposition*}{Proposition}
\DeclareMathOperator{\Aut}{Aut}
\DeclareMathOperator{\Eff}{Eff}
\DeclareMathOperator{\Effb}{\overline{Eff}}
\DeclareMathOperator{\Movb}{\overline{Mov}}
\DeclareMathOperator{\Nef}{Nef}
\DeclareMathOperator{\Bs}{Bs}
\DeclareMathOperator{\Cr}{Cr}
\DeclareMathOperator{\Crb}{{\overline{\Cr}}}
\DeclareMathOperator{\ord}{ord}
\DeclareMathOperator{\PGL}{PGL}
\DeclareMathOperator{\Supp}{Supp}
\newcommand{\bp}{\mathbf p}
\newcommand{\bq}{\mathbf q}
\newcommand{\dinf}{D_\lambda}
\title{The Diminished Base Locus Is Not Always Closed}
\author{John Lesieutre} 
\email{johnl@math.mit.edu}
\address{Department of Mathematics\\
  MIT\\
  77 Massachusetts Avenue\\
  Cambridge, MA 02139, USA} 
\thanks{This research was supported by an NSF Graduate Research
  Fellowship under Grant \#1122374.}
\keywords{diminished base locus, Zariski decomposition, Cremona transformations}
\begin{document}

\begin{abstract}
  We exhibit a pseudoeffective \(\R\)-divisor \(\dinf\) on the blow-up
  of \(\P^3\) at nine very general points which lies in the closed
  movable cone and has negative intersections with a set of curves
  whose union is Zariski dense.  It follows that the diminished base
  locus \(\Bm(\dinf) = \bigcup_{\text{$A$ ample}} \B(\dinf+A)\) is not
  closed and that \(\dinf\) does not admit a Zariski decomposition in
  even a very weak sense.  By a similar method, we construct an
  \(\R\)-divisor on the family of blow-ups of \(\P^2\) at ten distinct
  points, which is nef on a very general fiber but fails to be nef
  over countably many prime divisors in the base.
\end{abstract}

\maketitle

\section{Introduction}

For a pseudoeffective \(\R\)-divisor \(D\) on a normal projective
variety \(Y\), the diminished base locus (also called the non-nef
locus or restricted base locus) is the union
\[
\Bm(D) = \bigcup_{\substack{\text{$A$ ample} \\ \text{$D+A$
      $\Q$-Cartier}}} \B(D+A),
\]
where \(\B(D+A) = \bigcap_{n \geq 1} \Bs(n(D+A))\) is the stable base
locus~\cite{elmnp}.  This is at most a countable union of
subvarieties, but in many examples the union is finite, i.e.\ Zariski
closed.  We will give an example of an \(\R\)-divisor for which this
locus is not Zariski closed.

\begin{theorem}
\label{main}
Let \(X\) be the blow-up of \(\P^3\) at nine very general points.  There
exists a pseudoeffective \(\R\)-divisor \(\dinf\) on \(X\) with the
following properties:
\begin{enumerate}
\item There is a countable set of curves \(C_n \subset X\) with \(\dinf \cdot
  C_n < 0\), whose union is Zariski dense on \(X\).
\item \(\Bm(\dinf)\) is a countable union of curves.
\item There is no decomposition \(f^\ast \dinf
  \equiv_{\mathrm{num}} P + N\) of \(f^\ast \dinf\) into nef and
  effective components on any birational model \(f : Y \to X\).
\end{enumerate}
Further, there exists a big \(\R\)-divisor \(\dinf^\prime\) on
\(\P_X(\cO_X \oplus \cO_X(1))\) for which \(\Bm(\dinf^\prime)\) is a
countable union of curves, where \(\cO_X(1)\) is any very ample line
bundle on \(X\).
\end{theorem}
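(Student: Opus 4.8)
The plan is to produce $\dinf'$ by pulling $\dinf$ back to the $\P^1$-bundle $\pi\colon P = \P_X(\cO_X \oplus \cO_X(1)) \to X$ and adding a multiple of the tautological class to gain bigness in the fibre direction, while keeping the bad curves $C_n$ visible inside a distinguished section. Write $\xi = c_1(\cO_P(1))$. Since $\cO_X \oplus \cO_X(1)$ is a nef bundle (a direct sum of nef line bundles), $\xi$ is nef; and from the Grothendieck relation $\xi^2 = \pi^\ast\cO_X(1)\cdot \xi$ (here $c_2 = 0$) one computes $\int_P \xi^4 = (\cO_X(1))^3 > 0$, so $\xi$ is nef and big. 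I would then set $\dinf' = \pi^\ast \dinf + a\,\xi$ for a positive constant $a$ fixed below. As $a\xi$ is big and $\pi^\ast\dinf$ is pseudoeffective, $\dinf'$ is big.

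The bundle $P$ carries a section $S \cong X$, the image of the quotient $\cO_X \oplus \cO_X(1) \twoheadrightarrow \cO_X$, on which $\xi$ restricts trivially; thus $\dinf'|_S = \dinf$ under $\pi|_S\colon S \xrightarrow{\sim} X$. Regarding each $C_n$ as a curve in $S$, we get $\dinf'\cdot C_n = \dinf\cdot C_n < 0$. By the same negativity principle used for part (1) — an irreducible curve meeting a pseudoeffective class negatively lies in its diminished base locus — each $C_n \subset S$ lies in $\Bm(\dinf')$. Since $\bigcup C_n$ is Zariski dense in $S$, its closure is the surface $S$, so $\Bm(\dinf')$ will fail to be closed the moment I check that it is genuinely a union of \emph{curves} and not all of $S$.

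For the reverse inclusion I would compute base loci fibrewise. For a line bundle $L$ on $X$, the identity $\pi_\ast \cO_P(m) = \bigoplus_{i=0}^m \cO_X(i)$ gives $H^0(P, m\xi + \pi^\ast L) = \bigoplus_{i=0}^m H^0(X, L + i\,\cO_X(1))$, where the $i$-th summand consists of sections vanishing to order $i$ along $S$ and to order $m-i$ along the complementary section. The bottom summand ($i=0$) restricts on $S$ to $H^0(X, L)$, so the base locus of $|\,m\xi + \pi^\ast L\,|$ inside $S$ is exactly $\Bs|L|$, which for $L = k\dinf$ (plus a small ample perturbation) recovers $\Bm(\dinf) = \bigcup C_n$. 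The top summand ($i = m$) is what kills everything off $S$: if $\dinf + a\,\cO_X(1)$ is chosen nef, then $L + m\,\cO_X(1) = k(\dinf + a\,\cO_X(1))$ is asymptotically base-point free, so over every $y \in X$ the top monomial is available; combined with the bottom monomial (available exactly when $y \notin \bigcup C_n$) this forces $\pi^{-1}(y)$ to be base-point free for $y \notin \bigcup C_n$ and to meet the base locus only in the point $S \cap \pi^{-1}(y)$ for $y \in C_n$. Hence $\Bm(\dinf') = \bigcup_n C_n$, a countable union of curves.

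The hard part is the choice of $a$: to rule out the two-dimensional components $\pi^{-1}(C_n)$ I need $\dinf + a\,\cO_X(1)$ to be \emph{nef}, equivalently $\Bm(\dinf + a\,\cO_X(1)) = \emptyset$, which in turn requires the slopes $-\dinf\cdot C_n / (\cO_X(1)\cdot C_n)$ to be bounded above. This boundedness is not formal; it must be read off from the explicit dynamical construction of $\dinf$ (the $C_n$ form an orbit of curves under a group of pseudo-automorphisms, and both intersection numbers grow geometrically with a convergent ratio). Verifying it — together with the uniform control of the relative base locus in the $k \to \infty$ limit for $\R$-divisors — is where the real work lies.
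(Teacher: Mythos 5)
Your proposal addresses only the final clause of the theorem. The existence of \(\dinf\) together with properties (1)--(3) is taken as input: you freely use the curves \(C_n\), their negativity, their Zariski density, and the fact that \(\Bm(\dinf)\) is a countable union of curves. In the paper these claims are the main content: \(\dinf\) is constructed as the leading eigenvector of the map \(M_\sigma\) on \(N^1(X)\) induced by a sequence of Cremona transformations (Lemma~\ref{eigenstuff}), the \(C_n\) are strict transforms of lines whose classes are controlled via the Laface--Ugaglia disjointness result (Theorem~\ref{iterate}), density is proved using the surface Zariski decomposition, and part (3) follows from density plus negativity (Lemmas~\ref{negcurves} and~\ref{nowzd}). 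Omitting all of this is a genuine, large gap in a proof of the stated theorem, so what follows assesses only the part you do treat, namely Lemma~\ref{bignonclosed}.

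For that part, your construction is in substance the paper's: the paper realizes \(\P_X(\cO_X \oplus \cO_X(1))\) as the blow-up \(p : Y \to CX\) of the projective cone at its vertex and sets \(\dinf^\prime = p^\ast H + q^\ast \dinf\); since \(p^\ast \cO_{CX}(1)\) is your tautological class \(\xi\) and the exceptional divisor \(E\) of \(p\) is exactly your section \(S\) (the one on which \(\xi\) is trivial), taking \(H\) a multiple of the hyperplane class recovers your \(\pi^\ast \dinf + a\xi\). Where you diverge is in bounding \(\Bm(\dinf^\prime)\), and here you have misdiagnosed the difficulty. The step you call ``the hard part'' --- finding \(a\) with \(\dinf + a\,\cO_X(1)\) nef --- is completely formal and has nothing to do with the dynamical origin of \(\dinf\): the ample cone is open in the finite-dimensional space \(N^1(X)\), and \(\tfrac{1}{a}\dinf + \cO_X(1) \to \cO_X(1)\) as \(a \to \infty\), so \(\dinf + a\,\cO_X(1)\) is in fact \emph{ample} for all \(a \gg 0\). (Note also that bounding the slopes along the \(C_n\) alone would not suffice for nefness, since nefness must be tested against all curves; fortunately nothing needs to be checked at all.) Once this is granted, the delicate fibrewise computation can be discarded --- as written it has two further defects: nef does not imply asymptotically base-point free, so the availability of your ``top monomial'' requires the ample perturbation implicit in the definition of \(\Bm\), and you invoke the equality \(\Bm(\dinf) = \bigcup_n C_n\), which is neither proved in the paper nor needed --- in favor of the paper's formal argument. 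Namely, write \(\dinf^\prime = \pi^\ast(\dinf + a\,\cO_X(1)) + a(\xi - \pi^\ast \cO_X(1))\), observe that \(\xi - \pi^\ast\cO_X(1) \equiv S\) is effective while \(\pi^\ast(\dinf + a\,\cO_X(1))\) is nef, so that properties (2) and (4) of Lemma~\ref{bminusbasic} (plus the standard containment of \(\Bm\) of an effective divisor in its support) give \(\Bm(\dinf^\prime) \subseteq S\); combining with \(\Bm(\dinf^\prime) \subseteq \Bm(\pi^\ast \dinf) \cup \Bm(a\xi) = \pi^{-1}(\Bm(\dinf))\) (properties (4), (5), and nefness of \(\xi\)) yields that \(\Bm(\dinf^\prime)\) is contained in \(S \cap \pi^{-1}(\Bm(\dinf))\), a countable union of curves, while it contains the dense family \(C_n \subset S\) by negativity. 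In short: your construction of \(\dinf^\prime\) is correct and completable, but the two steps you flag as ``where the real work lies'' are, respectively, trivial and avoidable, whereas the real work of the theorem --- parts (1)--(3) --- is nowhere addressed.
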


A similar method gives an example related to the behavior of nefness
of divisors in families.  
\begin{theorem}
\label{neffamily}
Let \(\Sigma = ((\P^2)^{10} \setminus \Delta)/\PGL(3)\), where
\(\Delta\) is the locus where two points coincide, and let \(\cX \to
\Sigma\) be the family whose fiber over \(\bp \in \Sigma\) is
isomorphic to the blow-up of \(\P^2\) at the corresponding ten points.
There exists an \(\R\)-divisor \(C_\lambda\) on \(\cX\) such that
\(C_{\lambda,\bp}\) is nef for very general \(\bp\), but there are
countably many prime divisors \(V_n \subset \Sigma\) such that
\(C_{\lambda,\bp}\) is not nef if \(\bp \in V_n\).
\end{theorem}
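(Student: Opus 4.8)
The plan is to realize \(C_\lambda\) as the restriction to the fibers of a single numerical class, chosen to sit on the boundary of the nef cone of the very general fiber, and to produce the \(V_n\) as the places in the base where a prescribed countable family of negative classes becomes effective. First I would fix once and for all the identification of the N\'eron--Severi group of each fiber with the standard Lorentzian lattice \(\Lambda = \mathbb{Z}H \oplus \mathbb{Z}E_1 \oplus \cdots \oplus \mathbb{Z}E_{10}\), with \(H^2 = 1\), \(E_i^2 = -1\), \(H \cdot E_i = 0\). The essential structural point is that the points parametrized by \(\Sigma\) are \emph{ordered} (we remove \(\Delta\) and quotient only by \(\PGL(3)\)), so the ten sections of \(\cX \to \Sigma\) and the relative hyperplane class yield honest divisors \(\mathcal{E}_1, \dots, \mathcal{E}_{10}, \mathcal{H}\) on the total space \(\cX\); there is consequently no monodromy, and any class \(aH - \sum b_i E_i \in \Lambda \otimes \R\) defines a genuine \(\R\)-divisor \(a\mathcal{H} - \sum b_i \mathcal{E}_i\) on \(\cX\) restricting to the given class on every fiber. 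This reduces the problem to choosing a single good class in \(\Lambda \otimes \R\).

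For the class itself I would use the Weyl group \(W(E_{10})\) acting on \(\Lambda\) by Cremona transformations, preserving the intersection form and \(K = -3H + \sum E_i\). Since this lattice is Lorentzian and \(W(E_{10})\) is infinite, it contains a hyperbolic element \(w\) of infinite order with spectral radius \(\lambda > 1\); its \(\lambda\)-eigenvector \(\dinf\) is isotropic, \(\dinf^2 = 0\), and may be normalized to lie in the closure of the positive cone. I take \(C_\lambda\) to be the divisor on \(\cX\) attached to \(\dinf\) as above. The first task is then to show that \(C_{\lambda,\bp} = \dinf\) is nef on \(S_\bp\) for very general \(\bp\). Because \(\dinf^2 = 0\) and \(\dinf\) lies in the closure of the positive cone, nefness is equivalent to \(\dinf \cdot \Gamma \geq 0\) for every irreducible curve of negative self-intersection on \(S_\bp\); on a very general fiber these are precisely the \((-1)\)-curves, and I would verify that none of the countably many \((-1)\)-classes effective on the very general surface pairs negatively with the \(w\)-fixed null class \(\dinf\).

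The loci \(V_n\) come from the \(w\)-orbit of a single bad class. Since \(\dinf^\perp\) is a supporting hyperplane of the light cone, there is a lattice class \(\gamma_0\) with \(\dinf \cdot \gamma_0 < 0\), which I would arrange to be a \((-1)\)-class. Setting \(\gamma_n = w^n(\gamma_0)\) and using that \(w\) is an isometry with \(w\,\dinf = \lambda\dinf\) gives \(\dinf \cdot \gamma_n = \lambda^{-n}(\dinf \cdot \gamma_0) < 0\) for all \(n\), so these are countably many distinct \((-1)\)-classes, each pairing negatively with \(\dinf\); by the previous paragraph none of them is effective on the very general fiber. I would then set \(V_n = \{\,\bp \in \Sigma : \gamma_n \text{ is represented by an irreducible curve on } S_\bp\,\}\) and show that it is a nonempty proper closed subset, indeed a prime divisor. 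For \(\bp \in V_n\) the corresponding curve \(\Gamma_n\) satisfies \(C_{\lambda,\bp} \cdot \Gamma_n = \dinf \cdot \gamma_n < 0\), so \(C_{\lambda,\bp}\) fails to be nef; and the \(V_n\) are distinct because the \(\gamma_n\) are.

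The main obstacle is the interplay between the nefness statement and the structure of the \(V_n\): I must show that each chosen class \(\gamma_n\) is \emph{not} effective on the very general fiber, so that \(C_\lambda\) is genuinely nef there and the \(V_n\) are proper, while it \emph{does} become effective in codimension exactly one. This requires a precise understanding of which \((-1)\)-classes are represented by curves on the very general blow-up of \(\P^2\) at ten points, together with a dimension count, or an explicit one-parameter degeneration, showing that the locus where a non-generically-effective \((-1)\)-class acquires a representative is a genuine divisor in the twelve-dimensional base \(\Sigma\) rather than empty or of higher codimension. Controlling this codimension uniformly over the infinite orbit \(\{\gamma_n\}\) is the delicate heart of the argument.
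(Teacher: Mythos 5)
Your overall framework is the same as the paper's: spread a single lattice class over the total space using the ten sections, take it to be the leading eigenvector $\dinf$ of an infinite-order element $w$ of the Cremona--Weyl group $W(E_{10})$ acting on $\mathbb{Z}H\oplus\bigoplus_i\mathbb{Z}E_i$ (so $\dinf$ is isotropic and $\dinf\cdot\gamma_n=\lambda^{-n}(\dinf\cdot\gamma_0)$ for $\gamma_n=w^n\gamma_0$, exactly as in Lemma~\ref{notnef}), and cut out the $V_n$ as the loci where the orbit classes become classes of irreducible curves. The fatal gap is your choice of $\gamma_0$ as a $(-1)$-class. For any class $v$ on any fiber $X_\bp$ with $v^2=v\cdot K=-1$ and $v\cdot H\ge 0$ (the latter is forced if $v$ is ever to be the class of a curve), Riemann--Roch gives $\chi(v)=1+\tfrac{1}{2}(v^2-v\cdot K)=1$, while $h^0(K-v)=0$ because $(K-v)\cdot H=-3-v\cdot H<0$; hence $h^0(X_\bp,v)\ge 1$ on \emph{every} fiber. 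So effectivity of $(-1)$-classes never jumps: any $(-1)$-class that could appear as a curve class is already effective at the very general point. Consequently your two requirements contradict each other: if a $(-1)$-class $\gamma_0$ with $\dinf\cdot\gamma_0<0$ existed, then an effective representative on the very general fiber would have a component pairing negatively with $\dinf$, so $\dinf$ would not be nef there; and if $\dinf$ is nef there, then no such $\gamma_0$ exists, and your $V_n$ are either empty or all of $\Sigma$ --- no dimension count or one-parameter degeneration can repair this. The classes one must use are those with $\chi\le 0$, i.e.\ the nodal $(-2)$-classes (roots): the paper takes $C_0=h-e_1-e_2-e_3$, with $C_0^2=-2$ and $K\cdot C_0=0$, which is not effective for very general $\bp$ but becomes the class of an irreducible $(-2)$-curve exactly on the divisor $V_0\subset\Sigma$ where $p_1,p_2,p_3$ are collinear; $V_{n+1}$ is the strict transform of $V_n$ under the induced map $\rho:\Sigma\rat\Sigma$, the needed inequality $\dinf\cdot C_0=1-(r_1+r_2+r_3)<0$ is checked in Lemma~\ref{msigmatwodim}, and the genuinely nontrivial point that each $V_n$ is a prime divisor, distinct from the others, carrying an irreducible curve in the class $M_\sigma^n(C_0)$, is Lemma~\ref{vidistinct}, proved by specializing the points onto an elliptic curve.

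There is a second, independent gap in your nefness argument. Reducing nefness of the isotropic class $\dinf$ to nonnegativity against curves of negative self-intersection is fine (the Lorentzian reverse Cauchy--Schwarz inequality handles curves with $C^2\ge 0$), but the assertion that on a very general ten-point blow-up the only negative curves are $(-1)$-curves is an open problem for ten or more points (it is a consequence of the SHGH conjecture), not a known fact; and even granting it, you never explain how to verify the infinitely many inequalities $\dinf\cdot\gamma\ge 0$ over the effective $(-1)$-classes. The paper needs neither: for very general $\bp$ the map $M_\sigma$ preserves $\Effb(X_\bp)$ (Lemma~\ref{points}, a semicontinuity argument over the family), hence preserves $\Nef(X_\bp)$ as well, being an isometry of the intersection form under which the two cones are dual; the Perron--Frobenius-type Lemma~\ref{coneextremal} then forces the leading eigenvector to lie on an extremal ray of $\Nef(X_\bp)$. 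Replacing your curve-by-curve check with this cone-preservation argument makes the nefness claim unconditional, and is what actually makes the eigenvector construction work.
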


The behavior of this example is an instance of the following property
of nefness.
\begin{proposition*}[\cite{lazarsfeld}, Proposition 1.4.14]
  Suppose that \(X\) and \(S\) are varieties over a field and \(\pi :
  X \to S\) is a surjective and proper morphism.  Let \(D\) be an
  \(\R\)-Cartier divisor on \(X\).  If \(D_0\) is nef for some \(0 \in
  S\), then \(D_s\) is nef for very general \(s \in S\) (i.e.\ for all
  \(s\) not contained in some countable union of subvarieties).
\end{proposition*}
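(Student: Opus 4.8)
The plan is to reduce the nefness of $D_s$ to the non-negativity of $D$ against a countable collection of families of curves, exploiting that intersection numbers are locally constant in flat families. Concretely, I would parametrize the curves contained in the fibres of $\pi$ by the relative Hilbert scheme $\Hilb(X/S)$. Since nefness can be tested on integral curves, and every integral curve in a fibre $X_s$ is a one-dimensional subscheme of $X_s$ with some Hilbert polynomial $P$ of degree one, each such curve is a point of one of the schemes $\Hilb^P(X/S)$. The Hilbert polynomials form a countable set and each $\Hilb^P(X/S)$ is of finite type over $S$ with finitely many irreducible components, so altogether I obtain countably many irreducible families $T_i \to S$, each carrying a universal curve $\mathcal C_i \subset X \times_S T_i$ flat over $T_i$.

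The key elementary fact is that for $\R$-Cartier $D$ the function $t \mapsto D \cdot (\mathcal C_i)_t$ is constant on each irreducible $T_i$: for a genuine line bundle the degree on the members of a flat family of curves is constant, and one extends to $\R$-divisors by linearity. Write $d_i$ for this common value. I then single out the \emph{bad} families, those with $d_i < 0$. For a bad family no member can lie in $X_0$: if $(\mathcal C_i)_t \subset X_0$ for some $t$ with $\pi(t)=0$, then this one-dimensional subscheme, viewed as an effective $1$-cycle $\sum m_j C_j$, would satisfy $\sum m_j (D \cdot C_j) = d_i < 0$, forcing $D \cdot C_j < 0$ for some integral curve $C_j \subset X_0$ and contradicting the nefness of $D_0$. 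Hence $0 \notin \pi(T_i)$ for every bad $i$.

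Now I would invoke properness. When $\pi$ is projective each $T_i \to S$ is proper, so $V_i := \pi(T_i)$ is closed in $S$; since it avoids $0$ it is a proper closed subvariety. If $s \notin \bigcup_{\text{bad } i} V_i$, then no bad curve lies over $s$, every integral curve in $X_s$ belongs to a family with $d_i \geq 0$, and therefore $D_s$ is nef. As this excludes only the countable union of proper subvarieties $\bigcup_{\text{bad } i} V_i$, the divisor $D_s$ is nef for very general $s$, which is the assertion. (One may harmlessly assume $S$ irreducible; otherwise argue component by component.)

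The main obstacle is the reliance on projectivity to guarantee that the parameter spaces $T_i$ are proper over $S$, for only then is a dominant bad family forced actually to meet the fibre over $0$. To treat a general proper $\pi$ I would reduce to the projective case by the relative Chow lemma: choose a projective birational $S$-morphism $g : X' \to X$ with $X' \to S$ projective. Since $g$ is surjective and proper, each $g_s : X'_s \to X_s$ is surjective, and a short argument pushing curves forward shows that $D_s$ is nef if and only if $g_s^\ast D_s$ is nef; in particular $g_0^\ast D_0$ is nef, and the projective argument applied to $g^\ast D$ on $X'$ yields the conclusion for very general $s$. I expect verifying this equivalence of nefness under $g$, together with the clean bookkeeping of flatness and intersection numbers, to be the only genuinely technical part.
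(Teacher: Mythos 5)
The paper does not actually prove this proposition: it is quoted as background (with the citation to Lazarsfeld, Proposition 1.4.14) to frame Theorem~\ref{neffamily}, so there is no in-paper proof to compare against. Your argument is correct, and it is essentially the standard proof from the cited source: parametrize curves in fibers by the countably many irreducible components of the relative Hilbert scheme, use constancy of intersection numbers of an \(\R\)-Cartier divisor in flat families, and use properness of these components over \(S\) to conclude that any component carrying \(D\)-negative curves has closed image missing \(0\) (equivalently, in Lazarsfeld's phrasing, any component dominating \(S\) surjects onto \(S\) and hence specializes to a curve in \(X_0\), forcing nonnegative degree); the reduction of the general proper case to the projective one via Chow's lemma, together with the equivalence of nefness of \(D_s\) and \(g_s^\ast D_s\) for the surjective proper map \(g_s\), is also the standard way to handle that point. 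The only difference from the usual write-up is organizational --- you exclude the ``bad'' (negative-degree) components rather than the non-dominating ones --- which is just the contrapositive of the same dichotomy and changes nothing of substance.
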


There do not seem to be any examples known in characteristic \(0\) in
which \(D\) is a Cartier divisor and nefness is not simply an open
condition.  The example demonstrates that, at least in the generality
of \(\R\)-divisors, the ``very general'' of the conclusion is indeed
essential.  Some recent examples in positive and mixed characteristic
are discussed in~\cite{langer}.

Both examples arise from classical constructions. Throughout, we work
over an uncountable algebraically closed field of arbitrary
characteristic.  Starting with a set of \(k\) very general points on
\(\P^n\), and a divisor of degree \(d\) with given multiplicities at
these points, we make a sequence of Cremona transformations centered
at certain subsets of the points, and compute the degree and
multiplicities of the strict transform of the divisor under these
transformations.  The changes in degrees and multiplicities are
governed by an action of a Coxeter group of type \(T_{2,n+1,k-n-1}\),
an observation originally due to Coble~\cite{coble}. A modern account
of Coble's work can be found in the survey of Dolgachev and
Ortland~\cite{dolgachev}.  If \(n=2\) and \(k = 10\), or \(n=3\) and
\(k = 9\), then the associated groups of type \(T_{2,3,7}\) and
\(T_{2,4,5}\) are infinite and have elements acting with eigenvalues
of norm greater than \(1\).  The divisors of the examples arise as the
corresponding eigenvectors. For simplicity, we make explicit
computations for specific transformations in these groups, but the
results are valid for many other elements as well.

These Coxeter groups have often played a role in the study of the
birational geometry of blow-ups of projective space.  Nagata's
construction of infinitely many \((-1)\)-curves on blow-ups of
\(\P^2\) at \(9\) very general points makes use of the fact that the
group of type \(T_{2,3,6}\) is infinite~\cite{nagata}, while Mukai's
characterization of the blow-ups of \(\P^n\) which are Mori dream
spaces again relies on the finiteness of associated Coxeter
groups~\cite{mukai}. Laface and Ugaglia's study of a
higher-dimensional analogue of the Harbourne-Hirschowitz conjecture
also involves sequences of Cremona transformations centered at various
points~\cite{lu1}.

Elements of these groups have been studied from a dynamical
perspective as well: for blow-ups of \(\P^n\) at special
configurations of points, there can exist (pseudo-)automorphisms whose
action on cohomology is given by these elements.  This was studied in
dimension 2 in the work of McMullen~\cite{mcmullen} and
Bedford-Kim~\cite{bedfordkim2},\cite{bedfordkim}, and in higher
dimension by Perroni-Zhang~\cite{perronizhang}.  Eigenvectors of the
action on cohomology of the automorphisms in~\cite{mcmullen} provide
examples of the sort in Theorem~\ref{neffamily}.  I have learned that
recent work of T.\ Bayraktar also considers the diminished base loci
of a general class of \(\R\)-divisors constructed as eigenvectors of
pseudo-automorphisms; the example presented here is roughly one in
which the inclusion in Theorem 1.1 of~\cite{bayraktar} is an equality,
and the union is infinite.

The next section contains some preliminary lemmas needed for the
constructions. Section~\ref{exampleone} provides the example of
Theorem~\ref{neffamily}.  Section~\ref{cremona} introduces the
standard Cremona transformation \(\Cr : \P^3 \rat \P^3\), leading to
the construction of \(\dinf\).  The various claims of
Theorem~\ref{main} are proved in Sections~\ref{baselocus}
and~\ref{zariski} as Lemmas~\ref{negcurves},
\ref{nowzd}, and \ref{bignonclosed}.

\section{Preliminaries}
\label{prelim}

We first record a simple observation which implies that
\(\R\)-divisors arising as eigenvectors of automorphisms of \(N^1(X)\)
often generate extremal rays on the various cones of divisors.
\begin{lemma}[cf.~\cite{birkhoff}]
\label{coneextremal}
Suppose that \(V\) is a finite dimensional real vector space, \(G
\subset V\) is a closed convex cone with nonempty interior and
containing no line, and \(T : V \to V\) is a linear map with \(T(G)=
G\).  If \(T\) has a real eigenvalue \(\lambda\) of algebraic
multiplicity one, with magnitude larger than that of any other
eigenvalue, then the \(\lambda\)-eigenvector \(v_\lambda\) (with
appropriate sign) spans an extremal ray on \(G\).
\end{lemma}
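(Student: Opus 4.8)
The plan is to treat this as a Perron--Frobenius statement and to argue by iterating $T$ and its inverse. First I would fix the real Jordan decomposition $V = \R v_\lambda \oplus W$, where $W$ is the unique $T$-invariant complement obtained as the sum of the generalized eigenspaces for all eigenvalues other than $\lambda$, and let $\phi \colon V \to \R$ be the linear functional recording the $v_\lambda$-coordinate, so that $\phi \circ T = \lambda \phi$ and $\ker \phi = W$. Since $\lambda$ has strictly largest magnitude and algebraic multiplicity one, every eigenvalue of $T|_W$ has magnitude smaller than $\abs{\lambda}$, so $\lambda^{-n} T^n|_W \to 0$; hence $\lambda^{-n} T^n v \to \phi(v)\, v_\lambda$ for every $v \in V$. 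Note also that $T$ is invertible: because $G$ has nonempty interior it spans $V$, and $T(G) = G$ then forces $T$ to be surjective, so $T^{-n}(G) = G$ for all $n$.

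Next I would pin down the sign of $\lambda$ and locate the eigenvector inside the cone. Choosing $v \in \inter(G)$ with $\phi(v) \neq 0$ --- possible because the open set $\inter(G)$ cannot lie in the hyperplane $W = \ker\phi$ --- the sequence $\lambda^{-n} T^n v$ lies in $G$ and converges to $\phi(v)\, v_\lambda$. If $\lambda$ were negative, the rescaled vectors $\abs{\lambda}^{-n} T^n v \in G$ would accumulate at both $+\phi(v)\, v_\lambda$ and $-\phi(v)\, v_\lambda$, so the closed cone $G$ would contain the entire line through the nonzero vector $\phi(v)\,v_\lambda$, contradicting the no-line hypothesis. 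Thus $\lambda > 0$, and since $G$ is closed and stable under positive scaling we obtain $\phi(v)\, v_\lambda \in G$; after replacing $v_\lambda$ by its negative I may assume $\phi(v) > 0$, so $v_\lambda \in G$. Applying the same accumulation argument to an arbitrary $g \in G$ shows $\phi(g)\, v_\lambda \in G$, which together with $v_\lambda \in G$ and the absence of lines forces $\phi(g) \ge 0$; that is, $\phi \in G^\ast$.

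The heart of the argument is extremality, and here I would iterate $T^{-1}$ rather than $T$. Suppose $v_\lambda = x + y$ with $x, y \in G$, and write $x = \phi(x)\, v_\lambda + w_x$ and $y = \phi(y)\, v_\lambda + w_y$ with $w_x, w_y \in W$; the goal is $w_x = w_y = 0$. Applying $\lambda^{n} T^{-n}$, which fixes $v_\lambda$, gives $v_\lambda = \lambda^n T^{-n} x + \lambda^n T^{-n} y$ with both summands in $G$. Because $G$ is pointed with nonempty interior, its dual cone has an interior point $\psi$, and by compactness of $\{\, g \in G : \lVert g\rVert = 1 \,\}$ there is $c > 0$ with $\psi(g) \ge c\,\lVert g\rVert$ for all $g \in G$; hence any two elements of $G$ whose sum equals the fixed vector $v_\lambda$ have norms at most $\psi(v_\lambda)/c$. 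On the other hand $\lambda^n T^{-n} x = \phi(x)\, v_\lambda + (\lambda T^{-1})^n w_x$, and since every eigenvalue of $\lambda\, T^{-1}|_W$ has magnitude strictly greater than $1$, we have $\lVert (\lambda T^{-1})^n w_x\rVert \to \infty$ unless $w_x = 0$. Boundedness therefore forces $w_x = 0$, and likewise $w_y = 0$, so $x = \phi(x)\, v_\lambda$ and $y = \phi(y)\, v_\lambda$ with $\phi(x), \phi(y) \ge 0$ --- exactly the statement that $\R_{\ge 0}\, v_\lambda$ is an extremal ray.

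The routine ingredients are the spectral convergence $\lambda^{-n} T^n v \to \phi(v)\, v_\lambda$ and the elementary pointed-cone estimate $\psi(g) \ge c\,\lVert g\rVert$. The step I expect to demand the most care is the extremality argument: the relation $v_\lambda = x + y$ controls the $v_\lambda$-components directly but says nothing a priori about the $W$-components, and it is precisely the inverse iteration, coupled with pointedness, that forces the bounded cone elements out of the expanding subspace $W$. I would also verify the degenerate cases --- where $\phi(x)$ or $\phi(y)$ vanishes, or where one summand is $0$ --- but these present no difficulty once $\phi \in G^\ast$ is established.
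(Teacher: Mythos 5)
Your proof is correct and follows essentially the same route as the paper's: the same splitting \(V = \R v_\lambda \oplus W\), forward iteration of \(T\) to place \(v_\lambda\) in \(G\), and backward iteration of \(T^{-1}\), which expands the \(W\)-component, to establish extremality. The only real difference is in the closing step, and it is cosmetic: where the paper normalizes \(\lambda^{n_i}T^{-n_i}(v_\lambda \pm w)\) by \(\abs{\lambda^{n_i}T^{-n_i}w}\), extracts a convergent subsequence, and contradicts the no-line hypothesis with the limits \(\pm r\), you instead bound the norms of the two summands by a functional \(\psi\) interior to the dual cone and force the \(W\)-components to vanish outright — the same pointedness of \(G\) doing the same job (and along the way you make explicit two points the paper glosses over, namely \(\lambda > 0\) and \(\phi \in G^\ast\)).
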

\begin{proof}
  Fix a norm \(\abs{\cdot}\) on \(V\) and write \(V = \R v_\lambda
  \oplus W\), where \(W\) is the direct sum of the other real Jordan
  blocks, so that \(T\vert_W\) has all eigenvalues with norm strictly
  less than \(\lambda\).  Since \(G\) has nonempty interior, there
  exists \(v \in G\) with nonzero component in the
  \(v_\lambda\)-eigenspace.  Then \(\frac{1}{\lambda^n} T^n v\)
  converges to some nonzero multiple of \(v_\lambda\).  Switching the
  sign if needed, we conclude that \(v_\lambda\) is contained in
  \(G\).

  Suppose that \(v_\lambda\) is not extremal, i.e.\ that there exists
  a nonzero \(w \in W\) for which \(v_\lambda + w\) and \(v_\lambda
  -w\) are both in \(G\).  Since its image contains an open set, \(T\)
  is invertible and \(T^{-1}(G) = G\). There is a sequence \(n_i\) for
  which \(T^{-n_i} w/\abs{T^{-n_i} w}\) converges to a nonzero limit
  \(r \in V\).  Since \(T\vert_W\) has eigenvalues less than
  \(\lambda\), \(\abs{\lambda^n T^{-n} w}\) grows without bound as
  \(n\) increases, and \(v_\lambda/\abs{\lambda^n T^{-n} w}\)
  converges to \(0\).  It follows that the two sequences of vectors in
  \(G\)
\[
\frac{\lambda^{n_i} T^{-n_i}(v_\lambda \pm w)}{\abs{\lambda^{n_i}
    T^{-n_i} w}} = \frac{v_\lambda}{\abs{\lambda^{n_i} T^{-n_i} w}}
\pm \frac{\lambda^{n_i} T^{-n_i} w}{\abs{\lambda^{n_i} T^{-n_i} w}}
\]
converge to \(\pm r\).  The closedness of \(G\) implies that both
\(r\) and \(-r\) are contained in \(G\), contradicting the assumption
that \(G\) contains no line.
\end{proof}

Both examples deal with blow-ups of projective space, and it will be
useful to establish some basic notation for \(k\)-tuples of points on
\(\P^n\).  Throughout, we work over an uncountable algebraically
closed field of arbitrary characteristic.  Let \(\Sigma = ((\P^n)^k
\setminus \Delta)/\PGL(n+1)\) be the set of \(k\)-tuples with all
points distinct, and let \(\pi : \cX \to \Sigma\) be the family whose
fiber \(X_{\bp}\) over \(\bp = (p_1,\ldots,p_k) \in \Sigma\) is
isomorphic to the blow-up of \(\P^n\) at the corresponding \(k\)
points.

If \(Y\) is a normal projective variety, the group of \(\R\)-Cartier
divisors on \(Y\) modulo numerical equivalence is denoted \(N^1(Y)\),
and \([D] \in N^1(Y)\) is the numerical class of a divisor \(D\),
though when no confusion is possible we omit the brackets. Dually,
\(N_1(Y)\) is the group of curves modulo numerical equivalence, and
the class of \(C\) is written \([C]\).

For any fiber \(X = X_\bp\), there are decompositions \(N^1(X) = \R H
\oplus \bigoplus_{i=1}^k \R E_i\) and \(N_1(X) = \R h \oplus
\bigoplus_{i=1}^k \R e_i\), where \(H\) is the pullback of the
hyperplane class on \(\P^n\), \(h\) is the class of the strict
transform of a line disjoint from the points of \(\bp\), \(E_i\) are
the exceptional divisors, and \(e_i\) the classes of lines in the
\(E_i\).  We will refer to \(H,E_1,\ldots,E_k\) and \(h,e_1,\ldots,e_k\) as
the standard bases for \(N^1(X)\) and \(N_1(X)\).
The intersection pairing on the exceptional classes is given
by \(E_i \cdot e_i = -1\).  If \(\bp\) and \(\bq\) are two different
sets of points, there is an isomorphism \(\Phi_{ \bp \bq} :
N^1(X_{\bp}) \to N^1(X_{\bq})\) which sends \(H_{\bp}\) to \(H_{\bq}\)
and \(E_{i,\bp}\) to \(E_{i,\bq}\); the matrix for \(\Phi_{\bp \bq}\)
with respect to the above bases is the \((k+1) \times (k+1)\) identity
matrix.

The pseudoeffective cone \(\Effb(X) \subset N^1(X)\) is the closure of
the cone \(\Eff(X)\) generated by classes of effective Cartier
divisors, and the movable cone \(\Movb(X) \subset N^1(X)\) is the
closure of the cone generated by classes of such divisors whose base
locus has codimension at least \(2\). The next lemma shows that if
\(\bp\) and \(\bq\) are very general, the movable and pseudoeffective
cones of \(X_{\bp}\) and \(X_{\bq}\) coincide under the identification
\(\Phi_{\bp\bq}\).

\begin{lemma}
\label{points}
There is a set \(U \subset \Sigma\), the complement of a countable
union of subvarieties, such that if \(\bp\) and \(\bq\) lie in \(U\),
then \(\Phi_{\bp \bq}(\Eff(X_{\bp})) = \Eff(X_{\bq})\), \(\Phi_{\bp
  \bq}(\Effb(X_{\bp})) = \Effb(X_{\bq})\), and \(\Phi_{\bp
  \bq}(\Movb(X_{\bp})) = \Movb(X_{\bq})\).
\end{lemma}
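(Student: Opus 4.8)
The plan is to reduce the statement about the three \emph{real} cones to a statement about \emph{integral} classes and then invoke semicontinuity. Since $\Phi_{\bp\bq}$ is by definition the identity on the standard bases, it is an isomorphism of the integral N\'eron--Severi lattices $N^1(X_\bp)_{\mathbb Z}=\Pic(X_\bp)$ and $N^1(X_\bq)_{\mathbb Z}$ (recall numerical and linear equivalence agree on these blow-ups). Now $\Eff(X)$ is the convex cone on the set of effective integral classes, and $\Movb(X)$ is the closure of the convex cone on the integral classes admitting an effective representative whose base locus has codimension at least $2$. Hence it suffices to produce $U$ so that, for $\bp\in U$, the set of effective integral classes and the set of movable integral classes on $X_\bp$ are both independent of $\bp$: a linear isomorphism carrying one generating set of lattice points onto the other carries the generated cones, and (being a homeomorphism) their closures, onto one another. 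I will produce $U$ inside the honest parameter space $(\P^n)^k\setminus\Delta$; there the family carries the tautological exceptional divisors and a relative hyperplane class, so every integral class $v=(a,b_1,\dots,b_k)$ is the restriction of a line bundle $\mathcal L_v$. As all the constructions are $\PGL(n+1)$-equivariant, the resulting countable union of subvarieties descends to $\Sigma$.

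For the effective cone, fix an integral class $v$ and apply upper semicontinuity of $h^0(X_\bp,\mathcal L_{v,\bp})$ over $(\P^n)^k\setminus\Delta$. The generic value of $h^0$ is its global minimum, so if $v$ is effective on a very general fiber then it is effective on \emph{every} fiber, and no points need be removed. If instead $v$ is non-effective on the very general fiber, then it is effective exactly on the proper closed locus $Z_v$ where $h^0$ jumps above its generic value $0$, and removing $Z_v$ guarantees that $v$ is non-effective on $X_\bp$. Carrying this out for every $v$ in the countable lattice removes a countable union of proper subvarieties, after which the effective integral classes are exactly the generically effective ones.

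The movable cone is the main obstacle, because membership requires control of the codimension of a base locus rather than merely of $h^0$. For a fixed $v$ (and, to be safe, for each multiple $mv$), restrict to the dense open locus $\Sigma_v^{\circ}\subset(\P^n)^k\setminus\Delta$ on which $h^0(\mathcal L_v)$ attains its generic value. There $\pi_\ast \mathcal L_v$ is locally free and its formation commutes with base change, so the relative base scheme $\mathcal B_v=\Bs(\mathcal L_v)$ restricts on each fiber $X_\bp$ to the base locus of the complete linear system $|v|$. The morphism $\mathcal B_v\to\Sigma_v^{\circ}$ is proper, so its fiber dimension is upper semicontinuous; hence the codimension of $\Bs|v|$ in $X_\bp$ is constant on a dense open subset and can only drop on a proper closed subset $Z_v'$, and in particular ``codimension at least $2$'' is a generically constant condition. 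Removing, for each $v$ and each $m$, the proper closed set $W_v=\big((\P^n)^k\setminus\Delta\big)\setminus\Sigma_v^{\circ}$ together with $\overline{Z_v'}$ then forces the movability of $v$ on $X_\bp$ to equal its generic value. The delicate point to verify is precisely the base-change statement on $\Sigma_v^{\circ}$, which is what identifies the fibers of $\mathcal B_v$ with the fiberwise base loci and so makes the semicontinuity of base-locus codimension applicable.

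Taking $U$ to be the complement in $\Sigma$ of the countable union, over all integral $v$ and all $m\ge 1$, of the descended loci $Z_v$ and $W_v$ constructed above finishes the proof. For $\bp,\bq\in U$ the effective integral classes and the movable integral classes of $X_\bp$ and $X_\bq$ coincide as subsets of the lattice, and since $\Phi_{\bp\bq}$ is the identity in the standard bases it carries each generating set onto the other. It therefore carries $\Eff(X_\bp)$ onto $\Eff(X_\bq)$ and, by continuity of the linear isomorphism, $\Effb(X_\bp)=\overline{\Eff(X_\bp)}$ onto $\Effb(X_\bq)$ and $\Movb(X_\bp)$ onto $\Movb(X_\bq)$.
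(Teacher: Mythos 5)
Your proof is correct, and its skeleton---reduce each real cone to its integral generators, then use semicontinuity over the family to cut out countably many proper closed loci---is the same as the paper's; in particular your treatment of the effective cone via upper semicontinuity of \(h^0\) is exactly the paper's argument. Where you genuinely diverge is the movable cone, which you rightly identify as the main difficulty. The paper never introduces a relative base scheme: it characterizes an integral class \(D_\bp\) with base locus of codimension at least \(2\) as one with \(h^0(X_\bp,D_\bp)>1\) and no fixed part, i.e.\ no nonzero effective class \(F_\bp\) with \(h^0(X_\bp,D_\bp-F_\bp)=h^0(X_\bp,D_\bp)\); since \(F\) ranges over the countable lattice, movability becomes a countable conjunction of conditions involving only \(h^0\)'s of integral classes, and the semicontinuity argument used for the effective cone applies verbatim to each triple \((D,F,D-F)\). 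You instead track the base locus itself: constancy of \(h^0\) on \(\Sigma_v^\circ\) over a reduced (here smooth, irreducible) base gives, by Grauert's theorem, local freeness of \(\pi_\ast \mathcal{L}_v\) and compatibility with base change, which identifies the fibers of the relative base scheme \(\mathcal{B}_v\) set-theoretically with the fiberwise base loci; upper semicontinuity of fiber dimension for the proper map \(\mathcal{B}_v \to \Sigma_v^\circ\) then makes ``base locus of codimension at least \(2\)'' a condition that can fail only on a proper closed subset. The step you flag as delicate is precisely this cohomology-and-base-change statement, and it is covered by the standard citation, so there is no gap. The trade-off: the paper's fixed-part trick is more elementary---nothing beyond semicontinuity of \(h^0\) is needed, at the price of quantifying over pairs of classes---while your argument is more geometric, directly controlling the base locus in the family, and would adapt to finer conditions on base loci than ``no divisorial component.'' Your explicit device of working on \((\P^n)^k\setminus\Delta\) and descending \(\PGL(n+1)\)-equivariant loci to \(\Sigma\) also handles a point that the paper passes over silently.
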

\begin{proof}
  Since every divisor class on \(X_\bp\) or \(X_\bq\) is the
  restriction of a class on \(\cX\), to check that the effective (resp.\
  movable) cones coincide, it is enough to check that for very general
  \(\bp\) and \(\bq\), precisely the same integral classes \(D\) on
  \(\cX\) have effective (resp.\ movable) restrictions to \(X_\bp\) and
  \(X_\bq\).

  For a given class \(D = dH - \sum m_i E_{i}\) on \(\cX\), the set of
  \(\bp\) for which \(h^0(X_\bp,D_\bp) > 0\) is a closed subset of
  \(\Sigma\) by the semicontinuity theorem. It follows that \(X_\bp\)
  and \(X_\bq\) have the same effective integral classes as long as
  these two points lie off of the countably many proper closed subsets
  that arise in this way, and so the effective and pseudoeffective
  cones coincide.

  For the movable cone, we again restrict our attention to integral
  classes.  An integral class \(D_\bp\) on \(X_\bp\) has base locus of
  codimension \(2\) if \(h^0(X_\bp,D_\bp) > 1\) and \(D_\bp\) has no
  fixed part, i.e.\ there does not exist a nonzero class \(F_\bp\)
  with \(h^0(X_\bp,F_\bp) > 0\) such that \(h^0(X_\bp,D_\bp-F_\bp) =
  h^0(X_\bp,D_\bp)\).  The result follows as above by the fact that
  for any integral \(D\) and \(F\) on \(X\), each of
  \(h^0(X_\bp,F_\bp)\), \(h^0(X_\bp,D_\bp-F_\bp)\), and
  \(h^0(X_\bp,D_\bp)\) is constant for \(\bp\) in some open set, and
  only countably many \(D\) and \(F\) are considered.
\end{proof}

We will use the following properties of the diminished base locus,
which follow from the definition (see~\cite{elmnp} for details).
\begin{lemma}
\label{bminusbasic}
Suppose that \(D\) is a pseudoeffective \(\R\)-divisor on a normal projective
variety \(Y\).
\begin{enumerate}
\item \(\Bm(D)\) depends only on the numerical class \([D] \in
  N^1(Y)\).
\item \(\Bm(D) = \emptyset\) if and only if \(D\) is nef.
\item If \(C\) is a curve with \(D \cdot C < 0\), then \(C \subset
  \Bm(D)\).
\item \(\Bm(D+D^\prime) \subseteq \Bm(D) \cup \Bm(D^\prime)\).
\item If \(f : Y^\prime \to Y\) is a surjective morphism between
  smooth varieties, \(\Bm(f^\ast D) = f^{-1}(\Bm(D))\).
\item If \(\set{A_i}\) is a sequence of ample divisors converging to
  \(0\) in \(N^1(Y)\), with each \(D + A_i\) a \(\Q\)-divisor, then
  \(\Bm(D) = \bigcup_j \B(D+A_j)\).
\item If \(D \in \Movb(X)\), then every component of \(\Bm(D)\) has
  codimension at least \(2\).
\end{enumerate}
\end{lemma}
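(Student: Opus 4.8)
The plan is to derive all seven parts from the single structural fact that the stable base locus shrinks when one adds an ample class: if $A$ and $A'$ are ample with $A - A'$ ample, then writing $D + A = (D + A') + (A - A')$ and using subadditivity of stable base loci together with $\B(\text{ample}) = \emptyset$ gives $\B(D + A) \subseteq \B(D + A')$. Thus the family $\set{\B(D+A)}_{A\text{ ample}}$ is directed, decreasing as $A$ becomes more ample, and its union is computed along any sequence $A_j \to 0$. I would prove part (6) first, as it is used repeatedly: the inclusion $\supseteq$ is immediate since each $A_j$ appears in the defining union, and for $\subseteq$, given any ample $A$ with $D+A$ a $\Q$-divisor, one has $A - A_j$ an ample $\Q$-divisor for $j \gg 0$ (because $A_j \to 0$ keeps $A - A_j$ in the ample cone), so $\B(D+A) \subseteq \B(D + A_j)$ by the monotonicity above.

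With (6) in hand, parts (1)–(4) are short. For (1), if $D \equiv D'$ then $N := D' - D$ is numerically trivial; since ampleness is a numerical condition, $A \mapsto A + N$ is a bijection of the ample classes, and $D' + A = D + (A + N)$ as $\R$-divisor classes, so the two defining unions coincide term by term. For (3), if $D \cdot C < 0$ then $(D + A_j) \cdot C < 0$ for $j \gg 0$, and any effective divisor numerically equivalent to a positive multiple of $D + A_j$ must contain $C$ (its restriction to $C$ would otherwise have non-negative degree); hence $C \subseteq \B(D + A_j) \subseteq \Bm(D)$. Part (2) then follows: if $D$ is nef then $D + A$ is ample for every ample $A$, so $\B(D+A) = \emptyset$ and $\Bm(D) = \emptyset$, while a non-nef $D$ admits a curve $C$ with $D \cdot C < 0$ and so $\Bm(D) \neq \emptyset$ by (3). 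For (4) I would use subadditivity directly: $\B((D + D') + A) = \B((D + \tfrac12 A) + (D' + \tfrac12 A)) \subseteq \B(D + \tfrac12 A) \cup \B(D' + \tfrac12 A)$, and taking the union over $A$ gives the claim.

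For (7), the point is that a class $D + A_j$ with $D \in \Movb(X)$ and $A_j$ ample lies in the interior of $\Movb(X)$, hence is a finite positive combination $\sum_i c_i M_i$ of integral movable classes $M_i$, each with $\B(M_i)$ of codimension at least $2$, together with an ample class; subadditivity then gives $\B(D + A_j) \subseteq \bigcup_i \B(M_i)$, which has codimension at least $2$. By (6), $\Bm(D) = \bigcup_j \B(D + A_j)$ is a countable union of such loci, so each of its components has codimension at least $2$.

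The main obstacle is part (5), the pullback formula. One inclusion is routine: pulling back a section of $m(D + A)$ that is nonzero at $y$ yields a section of $m\,f^\ast(D+A)$ nonzero on all of $f^{-1}(y)$, so $\B(f^\ast(D+A)) \subseteq f^{-1}(\B(D+A))$; and since $f^\ast A$ is nef, $f^\ast A + A'$ is ample for any ample $A'$ on $Y'$, which promotes this to $\Bm(f^\ast D) \subseteq f^{-1}(\Bm(D))$. The reverse inclusion $f^{-1}(\Bm(D)) \subseteq \Bm(f^\ast D)$ is the delicate one, since sections of the ample perturbations on $Y'$ need not descend to $Y$; here I would pass to the Stein factorization of $f$ and treat the finite and connected-fiber cases separately, invoking the corresponding result of~\cite{elmnp}. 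This is the step where the full asymptotic theory, rather than elementary manipulation of the definition, is genuinely needed.
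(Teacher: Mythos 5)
The paper itself gives no proof of this lemma---it says the properties ``follow from the definition'' and cites~\cite{elmnp}---so your mostly self-contained derivation is a genuinely different route, and its architecture is sound: proving (6) first via the monotonicity \(\B(D+A) \subseteq \B(D+A')\) when \(A-A'\) is ample, and then reading off (1), (2), (3) is exactly the right order, and those parts are correct as written. The forward inclusion \(\Bm(f^\ast D) \subseteq f^{-1}(\Bm(D))\) in (5) is also correct (using that \(f^\ast A_j + A_j'\) is ample). For the reverse inclusion you do not give a proof but a citation, which puts you at parity with the paper; be aware, though, that your Stein-factorization sketch does not reduce it to descent of sections: even when \(f_\ast \cO_{Y'} = \cO_Y\), one gets \(f^{-1}(\B(D+A)) = \B(f^\ast(D+A))\), and the real difficulty is showing this set lies in \(\bigcup_{B'} \B(f^\ast(D+A) + B')\) with \(B'\) ample on \(Y'\)---i.e.\ that the nef-but-not-ample perturbation \(f^\ast A\) does not shrink the base locus below all ample perturbations. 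That step genuinely needs the asymptotic-multiplicity machinery, not formal manipulation.

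There is, however, a genuine gap in your proof of (4). You write \(\B((D+D')+A) = \B((D+\tfrac12 A)+(D'+\tfrac12 A)) \subseteq \B(D+\tfrac12 A)\cup\B(D'+\tfrac12 A)\), but \(\B\) is defined---and subadditivity via multiplying sections is valid---only for \(\Q\)-Cartier classes, and in general \emph{no} ample \(A\) makes \(D+\tfrac12 A\) and \(D'+\tfrac12 A\) simultaneously \(\Q\)-Cartier: that would force \(D-D'\) to be a rational class. This is not a pedantic point for this paper: property (4) is applied in Lemma~\ref{bignonclosed} to \(D = p^\ast H\) (rational) and \(D' = q^\ast \dinf\) (irrational), precisely the case your decomposition is unavailable; and even granting some definition of \(\B\) for irrational classes, the needed containment \(\B(D+\tfrac12 A) \subseteq \Bm(D)\) would not follow, since \(D + \tfrac12 A\) is then not a term of the defining union. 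The repair is easy and uses your own part (6): choose ample sequences \(A_j \to 0\) and \(A_j' \to 0\) with \(D + A_j\) and \(D' + A_j'\) each \(\Q\)-Cartier; then \((D+D')+(A_j+A_j') = (D+A_j)+(D'+A_j')\) is a sum of \(\Q\)-Cartier classes, subadditivity gives \(\B\bigl((D+A_j)+(D'+A_j')\bigr) \subseteq \B(D+A_j)\cup\B(D'+A_j') \subseteq \Bm(D)\cup\Bm(D')\), and applying (6) to the sequence \(A_j + A_j'\) yields (4). The same rationality issue is glossed in (7): before invoking subadditivity you must take the coefficients \(c_i\) in \(D+A_j = \sum_i c_i M_i\) to be rational (possible because \(D+A_j\) is a rational class and the solution set \(\set{c : \sum_i c_i M_i = D+A_j,\ c_i \geq 0}\) is a rational polyhedron) and then clear denominators; with that emendation your argument for (7) is fine.
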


\section{Nefness in families of $\R$-divisors}
\label{exampleone}

In this section, we adopt the notation of Section~\ref{prelim} for
blow-ups of \(\P^2\) at \(k = 10\) points.  The proof of
Theorem~\ref{neffamily} is contained in Lemmas~\ref{msigmatwodim},
~\ref{vidistinct}, and ~\ref{notnef}.

The standard Cremona transformation \(\Cr : \P^2 \rat \P^2\) given by
\([X_1,X_2,X_3] \mapsto [X_1^{-1},X_2^{-1},X_3^{-1}]\) has a
resolution
\[
\xymatrix{
X \ar[d]_\pi \ar@{=}[r] & X^\prime \ar[d]^{\pi^\prime} \\
\P^2 \ar@{-->}[r]^{\Cr} & \P^2
}
\]
Here \(\pi\) is the blow-up of \(\P^2\) at three points, and
\(\pi^\prime\) contracts the strict transforms of the lines between
any two of those points.  We employ the two notations \(X\) and
\(X^\prime\) to emphasize that the standard bases
\(\set{h,e_1,e_2,e_3}\) and
\(\set{h^\prime,e_1^\prime,e_2^\prime,e_3^\prime}\) are different.  If
\(C\) is any curve on \(X\), then its class on \(X^\prime\) in the new
basis is given by \(M([C])\) where
\[
M = \left( \begin{array}{rrrr}
2 & 1 & 1 & 1 \\
-1 & 0 & -1 & -1 \\
-1 & -1 & 0 & -1 \\
-1 & -1 & -1 & 0 \\
\end{array} \right).
\]

If \(\bp \in \Sigma\) is a configuration with  \(p_8\),
\(p_9\), and \(p_{10}\) in linear general position, there is a Cremona
transformation \(\Cr_\bp : \P^2 \rat \P^2\) defined by \(g^{-1} \circ
\Cr \circ g\), where \(g\) an element of \(\Aut(\P^2)\) sending
\(p_8\), \(p_9\), \(p_{10}\) to the points \([1,0,0]\), \([0,1,0]\),
\([0,0,1]\), and \(\Cr\) is the standard Cremona transformation.  Let
\(\rho : \Sigma \rat \Sigma\) be the rational map given by \(
(p_1,\ldots,p_{10}) \mapsto
(p_8,p_9,p_{10},\Cr_\bp(p_1),\ldots,\Cr_\bp(p_7))\).  This map is
regular off of the set \(L \subset \Sigma\), defined as the locus of
\(\bp\) with some \(p_i\) on a line through two of \(p_8\), \(p_9\),
and \(p_{10}\).  Let \(\bp\) be any point of \(\Sigma \setminus L\),
and set \(\bq = \rho(\bp)\).  Write \(\Pi_\sigma\) for the permutation
matrix for \(\sigma = (8,9,10,1,2,3,4,5,6,7)\), and consider the map
\(M_\sigma^{\bp \bq} : N^1(X_\bp) \to N^1(X_\bq)\) given in the
standard bases by
\[
M_\sigma^{\bp \bq} =  \left( \begin{array}{c|c}
M & 0 \\\hline
0 & I_7
\end{array}
\right)
\left( \begin{array}{c|c}
1 & 0 \\\hline
0 & \Pi_\sigma
\end{array}
\right),
\]
where both of these are \(11 \times 11\) block matrices, but with
different block sizes.  If \(C\) is a curve on \(X_\bp\), there is a
curve on \(X_{\bq}\) lying in the class \(M^{\bp \bq}_\sigma([C])\),
obtained by cyclically reordering the points so the last three come
first, and then taking the strict transform of \(C\) under a Cremona
transformation centered at these points.  If \(\bp\) is in very
general position, then the map \( \Phi_{\bp \bq}^{-1} : N^1(X_\bq) \to
N^1(X_\bp)\) is an isomorphism which identifies the effective cones,
by Lemma~\ref{points}.  Let \(M_\sigma = \Phi_{\bp \bq}^{-1} \circ
M_\sigma^{\bp \bq} : N^1(X_\bp) \to N^1(X_\bp)\) be the composition.

For very general \(\bp\), since both \(\Phi_{\bp \bq}^{-1}\) and \(
M_\sigma^{\bp \bq}\) identify the effective cones, we have
\(M_\sigma(\Effb(X_\bp)) = \Effb(X_\bp)\).  Because \(M_\sigma\)
preserves the intersection form on \(N^1(X_\bp)\), it satisfies
\(M_\sigma(\Nef(X_\bp)) = \Nef(X_\bp)\) as well.  

If \(\bp\) is a point for which the effective cone of \(X_{\bp}\) is
larger than that of a very general configuration, it is not
necessarily the case that \(\Phi_{\bp \bq}^{-1}\) maps effective
classes to effective classes. The divisor of the example will
fail to be nef precisely over certain configurations \(\bp\) with the
first three points collinear, the first six on a conic, etc.  These
are ``nodal relations'' among the points, in the terminology of
McMullen~\cite{mcmullen}.

\begin{lemma}
\label{msigmatwodim}
The map \(M_\sigma\) has characteristic polynomial
\((t-1)t^5q(t+t^{-1})\), where \(q(t) = t^5-t^4-6t^3+5t^2+8t-5\).
\(M_\sigma\) has a unique eigenvalue \(\lambda \approx 1.431\) of
magnitude greater than \(1\). When the \(\lambda\)-eigenvector
\(C_{\lambda,\bp}\) is written as \(h - \sum_{i=1}^{10} r_i e_i\), the
first three coefficients satisfy \(r_1+r_2+r_3 > 1\). 

The divisor \(C_{\lambda,\bp}\) is nef on \(X_{\bp}\) for very general
\(\bp\).
\end{lemma}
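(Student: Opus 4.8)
The plan is to deduce nefness directly from Lemma~\ref{coneextremal}, applied to the nef cone. Fix a very general configuration $\bp$, and take $V = N^1(X_\bp)$, $G = \Nef(X_\bp)$, and $T = M_\sigma$ in that lemma. First I would check the hypotheses on $G$. The nef cone of a projective variety is closed and convex by definition; it has nonempty interior because it contains the ample cone; and it contains no line, since if both $D$ and $-D$ were nef then $D \cdot C = 0$ for every curve $C$ and $D$ would be numerically trivial. Thus $G$ meets the standing requirements of Lemma~\ref{coneextremal}.

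Next I would verify the two hypotheses on $T = M_\sigma$. The invariance $M_\sigma(\Nef(X_\bp)) = \Nef(X_\bp)$ has already been recorded for very general $\bp$: since $X_\bp$ is a surface, the nef cone is dual to $\Effb(X_\bp)$ under the intersection form, and $M_\sigma$ preserves both the effective cone (via Lemma~\ref{points}, applied to $\bp$ and $\bq = \rho(\bp)$) and the intersection form. The spectral hypothesis is supplied by the earlier assertions of the lemma: $\lambda \approx 1.431$ is the unique eigenvalue of magnitude greater than $1$, so it strictly dominates every other eigenvalue in magnitude, and it is a real root of algebraic multiplicity one. That the remaining eigenvalues all have magnitude exactly $1$ also follows from the reciprocal symmetry $\mu \leftrightarrow \mu^{-1}$ of the spectrum of an isometry, though only strict dominance is actually needed here.

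With these checks in hand, Lemma~\ref{coneextremal} applies verbatim and shows that, after fixing the sign, the $\lambda$-eigenvector $C_{\lambda,\bp}$ spans an extremal ray of $\Nef(X_\bp)$. In particular $C_{\lambda,\bp} \in \Nef(X_\bp)$, so it is nef. I would also note that $M_\sigma$ has a fixed integer matrix in the standard basis, since $\Phi_{\bp\bq}^{-1}$ is the identity matrix there and the block matrix defining $M_\sigma^{\bp\bq}$ does not depend on $\bp$; hence the normalized eigenvector $C_{\lambda,\bp} = h - \sum_{i=1}^{10} r_i e_i$ has the same coordinates for every configuration, and the argument applies uniformly to each very general $\bp$.

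The substantive input is the spectral description provided by the earlier part of the lemma, namely that $\lambda$ is real, of algebraic multiplicity one, and strictly dominant in magnitude; granting this, the only work remaining for the present assertion is the cone-theoretic application of Lemma~\ref{coneextremal}. The point deserving attention is the quantifier: the invariance $M_\sigma(\Nef(X_\bp)) = \Nef(X_\bp)$ rests on Lemma~\ref{points} and so holds only for very general $\bp$ — for special configurations $\Phi_{\bp\bq}^{-1}$ may fail to preserve effectivity, as the subsequent discussion of nodal relations makes clear — which is exactly why the conclusion is stated for very general $\bp$ rather than for all $\bp$.
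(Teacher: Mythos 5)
Your proposal is correct and follows essentially the same route as the paper: the paper likewise takes the spectral data (characteristic polynomial, dominant eigenvalue, coefficient inequality) as a direct computation and then deduces nefness by applying Lemma~\ref{coneextremal} with \(G = \Nef(X_\bp)\) and \(T = M_\sigma\), where the invariance \(M_\sigma(\Nef(X_\bp)) = \Nef(X_\bp)\) comes from preservation of the effective cone (Lemma~\ref{points}) together with the intersection form. Your write-up simply makes explicit the cone hypotheses and the quantifier on \(\bp\) that the paper leaves implicit.
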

\begin{proof}
  The inequality on the coefficients can be checked by computing an
  approximation of the eigenvalue and then expressing each of the
  coefficients as a rational function of \(\lambda\).  The claimed
  nefness then follows from Lemma~\ref{coneextremal}, with the cone
  \(G = \Nef(X_\bp) \subset N^1(X)\) and with \(M_\sigma\) for the
  linear map \(T\).
\end{proof}

\begin{remark}
  In the standard coordinates, the divisor is approximately
\[
C_\lambda \approx (1 , -0.451, -0.440, -0.408, -0.315, -0.307, 
 -0.285, -0.220, -0.215, -0.199, -0.154).
\]
\end{remark}
Let \(C_\lambda\) be the corresponding divisor \(h - \sum_{i=1}^{10}
r_i e_i\) on the total space \(\cX\).  Though \(C_{\lambda,\bp}\) is
nef for very general \(\bp\), we will see that if \(\bp\) lies on any
of countably many subvarieties \(V_n\) of \(\Sigma\) for which
\(X_\bp\) contains \((-2)\)-curves of certain classes,
\(C_{\lambda,\bp}\) is not nef.  Define \(V_0\) to be the set of \(\bp
\in \Sigma\) for which \(p_1\), \(p_2\), and \(p_3\) are collinear.
If \(\bp_0 \in V_0\), there is a curve \(\bar{\ell} \subset X_{\bp_0}\) of
class \( C_0 = h - e_1 - e_2 - e_3\).  Then \(C_{\lambda,{\bp_0}} \cdot
\bar{\ell} = 1 - r_1-r_2-r_3 < 0\), and \(C_{\lambda_,\bp_0}\) is not
nef.  Similarly, \(\bp_1 = \rho(\bp_0)\) is a configuration of points
with the first six lying on a conic, and the strict transform of that
conic on \(X_{\bp_1}\) has negative intersection with
\(C_{\lambda,\bp_1}\).  Generally, for \(n \geq 0\) define \(V_{n+1}
\subset \Sigma\) to be the strict transform of \(V_n\) under \(\rho\).
\begin{lemma}
\label{vidistinct}
Each \(V_n\) is a prime divisor not equal to \(L\), and \(V_m\) and
\(V_n\) are distinct if \(m \neq n\).  For any point \(\bp_n \in
V_n\), there exists a curve \(C_n \subset X_{\bp_n}\) in the
class \(M_\sigma^n(C_0)\).
\end{lemma}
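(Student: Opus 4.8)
The plan is to argue by induction on \(n\), exploiting that \(\rho : \Sigma \rat \Sigma\) is birational (its inverse is again a relabelling followed by a standard Cremona transformation) and that the strict transform of a prime divisor under a birational map is again a prime divisor, provided the divisor is neither contracted nor equal to the exceptional locus. The base case is immediate: \(V_0\), the locus where \(p_1,p_2,p_3\) are collinear, is an irreducible divisor cut out by a single determinantal condition, and it is distinct from \(L\), which is the different incidence condition that some \(p_i\) lie on a line joining two of \(p_8,p_9,p_{10}\). For every \(\bp_0 \in V_0\) the strict transform of the line through the three collinear points is a curve on \(X_{\bp_0}\) of class \(C_0 = h - e_1 - e_2 - e_3 = M_\sigma^0(C_0)\), establishing the curve statement for \(n=0\).

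For the inductive step, suppose \(V_n\) is a prime divisor, not equal to \(L\), and that every \(X_{\bp_n}\) with \(\bp_n \in V_n\) carries a curve \(C_n\) of class \(\gamma_n := M_\sigma^n(C_0)\). Since the indeterminacy and exceptional behaviour of \(\rho\) is confined to \(L \neq V_n\), the map \(\rho\) is defined along a dense open subset of \(V_n\), and I would set \(V_{n+1}\) to be the closure of its image. To see that this is again a prime divisor different from \(L\), I track curve classes: over a general \(\bp_{n+1} = \rho(\bp_n) \in V_{n+1}\), the strict transform of \(C_n\) under the Cremona transformation centred at \(p_8,p_9,p_{10}\) is a genuine curve---one checks that \(C_n\) is none of the three contracted lines---whose class in the standard basis is \(M_\sigma^{\bp_n \bp_{n+1}}(\gamma_n)\). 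Because \(\Phi_{\bp_n\bp_{n+1}}\) is the identity in standard coordinates, this class is exactly \(M_\sigma(\gamma_n) = \gamma_{n+1}\); in particular \(\gamma_{n+1}\) is a nonzero effective class on the general fibre over \(V_{n+1}\), so \(V_{n+1}\) has dimension \(11\) (i.e.\ \(V_n\) is not contracted) and, being the strict transform rather than the exceptional divisor, it is not \(L\). Finally, since the existence of an effective curve in the fixed integral class \(\gamma_{n+1}\) is a closed condition by semicontinuity of \(h^0\) and holds on a dense subset of \(V_{n+1}\), it holds at every point of \(V_{n+1}\), which completes the induction and the curve statement.

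It remains to prove the divisors are pairwise distinct. Here the key computation is that, since \(M_\sigma\) preserves the intersection form and \(M_\sigma C_\lambda = \lambda C_\lambda\),
\[
C_\lambda \cdot \gamma_n = \bigl(M_\sigma^{-n} C_\lambda\bigr) \cdot C_0 = \lambda^{-n}\,(C_\lambda \cdot C_0) = \lambda^{-n}\bigl(1 - (r_1+r_2+r_3)\bigr),
\]
which by Lemma~\ref{msigmatwodim} is strictly negative, and these values are pairwise distinct as \(n\) varies because \(\lambda \neq 1\). Hence the classes \(\gamma_n\) are pairwise distinct (and their degrees \(\gamma_n \cdot H\) grow without bound). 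Suppose \(V_m = V_n\) with \(m < n\); applying the strict transform under \(\rho^{-1}\) repeatedly reduces this to \(V_0 = V_{n-m}\), so it suffices to show that \(V_0\) is not \(\rho\)-periodic.

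I expect this last step---turning the distinctness of the classes \(\gamma_n\) into distinctness of the divisors \(V_n\)---to be the main obstacle, together with the bookkeeping needed to guarantee at each stage that \(V_n\) is neither contracted by \(\rho\) nor equal to \(L\). The cleanest way to settle non-periodicity is a monodromy argument: moving the ten points while preserving only the collinearity of \(p_1,p_2,p_3\) acts on \(N^1(X_{\bp_0})\) through transformations fixing \(C_0\), and the only effective \((-2)\)-class invariant under this monodromy is \(C_0\) itself. Thus the generic member of \(V_0\) carries no effective \((-2)\)-curve of class \(\gamma_{n-m}\) for \(n-m \geq 1\), so \(V_0 = V_{n-m}\) would force \(\gamma_{n-m} = C_0 = \gamma_0\), contradicting the distinctness of the \(\gamma_n\).
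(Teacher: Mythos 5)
Your inductive bookkeeping is largely sound and parallels the paper's second paragraph: defining \(V_{n+1}\) as the strict transform, propagating the curve class through the isomorphism \(X_{\bp_n} \cong X_{\bp_{n+1}}\) (valid off \(L\)), and using semicontinuity of \(h^0\) to pass from general to arbitrary points of \(V_n\). But the two claims you yourself identify as the crux --- that \(V_{n+1} \neq L\) and that the \(V_n\) are pairwise distinct --- are not established, and the monodromy argument you propose for them is vacuous. The family \(\cX \to \Sigma\) has marked sections: the classes \(h, e_1, \ldots, e_{10}\) are defined on the total space, which is exactly why \(\Phi_{\bp\bq}\) is the identity matrix in standard bases. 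Consequently the monodromy over (any subvariety of) \(\Sigma\) acts \emph{trivially} on \(N^1(X_\bp)\); every class is monodromy-invariant, so ``the only effective \((-2)\)-class invariant under this monodromy is \(C_0\)'' is not the output of any monodromy computation --- it is verbatim the statement you need to prove, namely that the blow-up at a generic point of \(V_0\) carries no effective curve in the class \(\gamma_k\) for \(k \geq 1\). The same gap undermines ``being the strict transform rather than the exceptional divisor, it is not \(L\)'': a strict transform can perfectly well coincide with \(L\), and excluding this again requires control of the effective curve classes on fibers over \(V_n\). (Separately, your stated reason that \(V_n\) is not contracted --- effectivity of \(\gamma_{n+1}\) on fibers --- is a non sequitur; the correct reason is that \(\rho\) restricts to an isomorphism from \(\Sigma \setminus L\) onto its image, so no prime divisor other than possibly \(L\) can be contracted.)

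The paper closes precisely this gap by a specialization argument. Choose \(\bp_0 \in V_0\) with all ten points on a smooth plane cubic \(E\): take \(p_1, p_2, p_3 = E \cap \ell\) for a transverse line \(\ell\), and \(p_4, \ldots, p_{10}\) very general on \(E\). Then \(K_{X_{\bp_0}} \sim -\bar{E}\), so any rational curve \(C\) with \(K_{X_{\bp_0}} \cdot C \geq 0\) satisfies \(\bar{E} \cdot C = 0\), i.e.\ \(3d - \sum_i m_i = 0\), and the genericity hypothesis in \(\Pic(E)\) forces \(C = \bar{\ell}\). Thus \(X_{\bp_0}\) has a \emph{unique} rational curve of self-intersection \(\leq -2\). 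This uniqueness does double duty: a new collinearity among the points would produce a second \((-2)\)-curve, so the orbit \(\bp_{n+1} = \rho(\bp_n)\) stays off \(L\) for all \(n\) (giving \(V_n \neq L\)), and the unique \((-2)\)-curve on \(X_{\bp_n}\) has class \(\gamma_n = M_\sigma^n(C_0)\), so the pairwise distinctness of the classes \(\gamma_n\) (which your intersection computation with \(C_\lambda\) correctly establishes) forces the divisors \(V_n\) to be pairwise distinct. Without this specialization, or some substitute that genuinely controls effective classes over \(V_0\), your proof does not go through.
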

\begin{proof}
  To prove these sets are distinct, we will construct a sequence of
  points \(\bp_n \in V_n \setminus L\) such that \(X_{\bp_n}\)
  contains a curve lying in the class \(M_\sigma^n(C_0)\), which is
  the unique rational curve of self-intersection less than or equal
  \(-2\).  Let \(E \subset \P^2\) be a smooth elliptic curve.
  Construct \(\bp_0 \in V_0\) by choosing points on \(E\) such that
  \(p_1\), \(p_2\), and \(p_3\) are the points of intersection of
  \(E\) with some line \(\ell\) meeting \(E\) transversely, and
  \(p_4,\ldots,p_{10}\) have the property that if \(3d-\sum_{i=1}^{10}
  m_i = 0\), the class \(d \ell\vert_E - \sum_{i=1}^{10} m_i p_i\) is
  not linearly equivalent to \(0\) on \(E\) unless \(m_4 = \cdots =
  m_{10} = 0\). This condition will be met if these points are chosen
  to be very general.  Write \(\bar{\ell}\) and \(\bar{E}\) for the
  strict transforms of \(\ell\) and \(E\) on \(X_{\bp_0}\).

  Suppose that \(C \sim d \, \pi^\ast h - \sum_{i=1}^{10} m_i e_i\) is
  a rational curve with \(K_{X_{\bp_0}} \cdot C \geq 0\).  Since
  \(K_{X_{\bp_0}} \sim -\bar{E}\), we have \(\bar{E} \cdot C \leq 0\),
  and so \(\bar{E} \cdot C = 0\).  Then \(3d - \sum_{i=1}^{10} m_i =
  0\), and the hypothesis on the points implies that \(C \sim h - e_1
  - e_2 - e_3\) is the curve \(\bar{\ell}\).  It follows that 
  under any sequence of Cremona transformations, no three points will
  become collinear; indeed, the strict transform of a line containing
  these three points would be a \((-2)\)-curve on \(X_{\bp_0}\), but
  \(\bar{\ell}\) is the only such.  We may therefore define a sequence
  of points \(\bp_{n+1} \in V_{n+1}\) by taking \(\bp_{n+1} =
  \rho(\bp_n)\).  For each \(n\), the image \(C_n\) of \(\bar{\ell}\)
  is the unique \((-2)\)-curve on \(X_{\bp_n}\), and lies in the class
  \(M_\sigma^n(C_0)\), where \(C_0 = h-e_1-e_2-e_3\).  This implies
  that the divisors \(V_m\) are distinct.

  A general point \(\bp_n \in V_n\) is of the form \(\rho^n(\bp_0)\)
  for some point \(\bp_0 \in V_0\).  The strict transform on
  \(X_{\bp_0}\) of a line through the first three points of \(\bp_0\)
  has class \(C_0\), and this curve has class \(M_\sigma^n(C_0)\) on
  \(X_{\bp_n}\).
\end{proof}

\begin{lemma}
\label{notnef}
If \(\bp \in V_n\), then \(C_{\lambda,\bp}\) is not nef.
\end{lemma}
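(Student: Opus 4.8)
The goal is to show that $C_{\lambda,\bp}$ fails to be nef whenever $\bp \in V_n$, and the natural strategy is to exhibit a curve on $X_\bp$ against which $C_{\lambda,\bp}$ has negative intersection. The plan is to use the curve $C_n \subset X_{\bp}$ of class $M_\sigma^n(C_0)$ provided by Lemma~\ref{vidistinct}, and compute $C_{\lambda,\bp} \cdot C_n$. First I would note that because the points lie in $\Sigma$ and the intersection pairing is constant across the family, it suffices to compute this pairing at the level of numerical classes: $C_{\lambda,\bp} \cdot C_n = C_\lambda \cdot M_\sigma^n(C_0)$, where $C_\lambda = h - \sum r_i e_i$ is the $\lambda$-eigenvector produced in Lemma~\ref{msigmatwodim}.

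The key computational step exploits that $M_\sigma$ preserves the intersection form and that $C_\lambda$ is an eigenvector. Since $M_\sigma$ preserves the intersection pairing, its adjoint (transpose) with respect to that pairing is $M_\sigma^{-1}$, so I would rewrite
\[
C_\lambda \cdot M_\sigma^n(C_0) = (M_\sigma^{-n} C_\lambda) \cdot C_0.
\]
Because $C_\lambda$ is the $\lambda$-eigenvector of $M_\sigma$, it is an eigenvector of $M_\sigma^{-1}$ with eigenvalue $\lambda^{-1}$, so $M_\sigma^{-n} C_\lambda = \lambda^{-n} C_\lambda$. Hence
\[
C_{\lambda,\bp} \cdot C_n = \lambda^{-n}\, (C_\lambda \cdot C_0) = \lambda^{-n}\,\bigl(1 - r_1 - r_2 - r_3\bigr).
\]
Lemma~\ref{msigmatwodim} establishes $r_1 + r_2 + r_3 > 1$, so the factor $1 - r_1 - r_2 - r_3$ is strictly negative, while $\lambda^{-n} > 0$ since $\lambda \approx 1.431 > 0$. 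Therefore $C_{\lambda,\bp} \cdot C_n < 0$, and $C_{\lambda,\bp}$ is not nef.

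The main obstacle, such as it is, lies less in the computation than in justifying the base-change invariance: I must be careful that the intersection number $C_{\lambda,\bp} \cdot C_n$ genuinely equals the numerical pairing $C_\lambda \cdot M_\sigma^n(C_0)$ on the fixed fiber. This is where the constancy of the intersection form across fibers of $\cX \to \Sigma$ and the identification $\Phi_{\bp\bq}$ enter, together with the fact (from Lemma~\ref{vidistinct}) that an actual effective curve in the class $M_\sigma^n(C_0)$ exists on $X_{\bp}$, so the pairing records an honest intersection with a curve rather than a merely numerical quantity. Once that is in place, the eigenvector identity does all the work and the sign follows immediately from the coefficient inequality already proved.
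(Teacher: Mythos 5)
Your proof is correct and follows essentially the same route as the paper: both exhibit the curve $C_n$ of class $M_\sigma^n(C_0)$ from Lemma~\ref{vidistinct} and use the $M_\sigma$-invariance of the intersection form together with the eigenvector property to get $C_{\lambda,\bp} \cdot C_n = \lambda^{-n}(C_\lambda \cdot C_0) = \lambda^{-n}(1 - r_1 - r_2 - r_3) < 0$. The only cosmetic difference is that you shift $M_\sigma^{-n}$ onto $C_\lambda$ by adjointness, whereas the paper rescales $C_\lambda$ as $\lambda^{-n} M_\sigma^n([C_\lambda])$ and applies invariance of the pairing to both factors; these are the same manipulation.
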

\begin{proof}
  For any point \(\bp \in V_n\), there is a curve \(C_n \subset
  X_{\bp}\) with class \(M_\sigma^n(C_0)\).  Then
\[
C_{\lambda,\bp} \cdot C_n = \left( \frac{1}{\lambda^n}
  M_\sigma^n ([C_{\lambda,\bp}]) \right) \cdot M_\sigma^n
([C_0]) = \frac{1}{\lambda^n} [C_{\lambda,\bp}] \cdot [C_0]
< 0. \qedhere
\]
\end{proof}

\begin{remark}
\label{weyldisclaimer}
  The matrix \( \left( \begin{smallmatrix} M & 0 \\ 0 & I_7
    \end{smallmatrix}
  \right)\) corresponding to a Cremona transformation, together with
  the matrices \( \left( \begin{smallmatrix} 1 & 0 \\ 0 & P
    \end{smallmatrix} \right)\) which permute the exceptional
  components by a permutation \(P\), define an action of the Coxeter
  group of type \(T_{2,3,7}\) on \(N^1(X_\bp)\).  This action is
  explored more fully in the original work of Coble \cite{coble} and
  the book of Dolgachev and Ortland~\cite{dolgachev}.  Elements of
  this group correspond to a finite sequences of permutations of the
  points followed by Cremona transformations.  Any such product
  preserves the nef and effective cones for very general \(\bp\), and
  so the construction carried out above works just as well for other
  elements of this group with a leading eigenvalue larger than \(1\).
\end{remark}

\section{The standard Cremona transformation and its iterates}
\label{cremona}

We now turn to the second example, and will employ the notation of
Section~\ref{prelim} for blow-ups of \(\P^3\).  Some notation from
Section~\ref{exampleone} will be reused in the new context.  The
example of Theorem~\ref{main} will be constructed as an eigenvector of
a map \(N^1(X) \to N^1(X)\) induced on a blow-up of \(\P^3\) by a
certain sequence of Cremona transformations.

The standard Cremona transformation of \(\P^3\) centered at four
non-coplanar points \(p_1,\ldots,p_4\) is the birational map \(\Cr :
\P^3 \rat \P^3\) defined by \[ [X_1,X_2,X_3,X_4] \mapsto
[X_1^{-1},X_2^{-1},X_3^{-1},X_4^{-1}], \] where the coordinates are
chosen so the points \(p_i\) lie at the intersections of the
coordinate hyperplanes. The map \(\Cr\) is toric and is easily seen to
have a resolution
\[
\xymatrix{
 & Y \ar[dl]_p \ar[dr]^{p^\prime} & \\
X \ar@{-->}[rr]^{\Crb} \ar[d]_\pi &&  X^\prime \ar[d]^{\pi^\prime} \\
\P^3 \ar@{-->}[rr]^{\Cr} && \P^3
}
\]
Here both \(\pi : X \to \P^3\) and \(\pi^\prime : X^\prime \to \P^3\)
are the blow-up of \(\P^3\) at \(p_1,\ldots,p_4\), with exceptional
divisors \(E_i\) and \(E_i^\prime\) respectively.  Let \(F_i\) and
\(F_i^\prime\) denote the strict transforms on \(X\) and \(X^\prime\)
of planes through the three points other than \(p_i\), and \(H\) and
\(H^\prime\) the pullbacks of \(\cO_{\P^3}(1)\).  Take \(l_{ij}\) and
\(l_{ij}^\prime\) to be the lines in \(\P^3\) through \(p_i\) and
\(p_j\), and \(\bar{l}_{ij}\) and \(\bar{l}_{ij}^\prime\) their strict
transforms.  The \(\bar{l}_{ij}\) are smooth rational curves with
normal bundle \(\cO_{\P^1}(-1) \oplus \cO_{\P^1}(-1)\) and \(\Crb\) is
the flop of these six curves.  More precisely, \(p\) is the blow-up of
\(X\) along the six curves \(\bar{l}_{ij}\), with exceptional divisors
isomorphic to \(\P^1 \times \P^1\), and these are contracted along the
other ruling by \(p^\prime\). The strict transform of \(F_i\) under
\(\Crb\) is the exceptional divisor \(E_i^\prime\), while the strict
transform of \(E_i\) is \(F_i^\prime\).

The indeterminacy locus of \(\Crb : X \rat X^\prime\) is the union of
the six curves \(\bar{l}_{ij}\); since this map is an isomorphism in
codimension \(1\), taking strict transforms of divisors induces an
isomorphism \(M : N^1(X) \to N^1(X^\prime)\), as well as an
isomorphism \(\check{M} : N_1(X) \to N_1(X^\prime)\) defined by requiring \(D
\cdot C = MD \cdot \check{M}C\).  This action has been studied by Laface and
Ugaglia in connection with special linear systems of divisors on
\(\P^3\)~\cite{lu1},\cite{lu2}. In that context \(M\) describes the
change in the multiplicity of a divisor at prescribed points under
Cremona transformations centered at those points.

\begin{lemma}
\label{formulai}
The isomorphisms \(M : N^1(X) \to N^1(X^\prime)\) and \(\check{M} : N_1(X) \to
N_1(X^\prime)\) are given in the standard bases
by the matrices
\[
M = \left( \begin{array}{rrrrr}
3 & 1 & 1 & 1 & 1 \\
-2 & 0 & -1 & -1 & -1 \\
-2 & -1 & 0 & -1 & -1 \\
-2 & -1 & -1 & 0 & -1 \\
-2 & -1 & -1 & -1 & 0 \\
\end{array} \right), \quad
\check{M} = \left( \begin{array}{rrrrr}
3 & 2 & 2 & 2 & 2 \\
-1 & 0 & -1 & -1 & -1 \\
-1 & -1 & 0 & -1 & -1 \\
-1 & -1 & -1 & 0 & -1 \\
-1 & -1 & -1 & -1 & 0 \\
\end{array} \right).
\]
\end{lemma}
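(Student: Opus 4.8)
The plan is to compute the two matrices directly from the geometry of the standard Cremona transformation, exploiting the fact that $\Crb$ is toric and is an isomorphism in codimension $1$. Since the matrices are required in the standard bases $\{H, E_1, \dots, E_4\}$ for $N^1(X)$ and $\{h, e_1, \dots, e_4\}$ for $N_1(X)$ (and their primed counterparts on $X'$), it suffices to track where each basis element goes under taking strict transforms, and then to verify the duality relation $D \cdot C = MD \cdot \check M C$ is consistent with the claimed matrices.

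First I would compute $M$ on divisors. The excerpt already records the two crucial geometric facts: the strict transform of $F_i$ under $\Crb$ is $E_i'$, and the strict transform of $E_i$ is $F_i'$. So I would first express $F_i$ in the standard basis. A plane through the three points $p_j$ with $j \neq i$ has class $H - \sum_{j \neq i} E_j$ on $X$, so $F_i = H - \sum_{j \neq i} E_j$, and likewise $F_i' = H' - \sum_{j \neq i} E_j'$. The relation $M(F_i) = E_i'$ then gives four linear equations $M\bigl(H - \sum_{j\neq i} E_j\bigr) = E_i'$, and $M(E_i) = F_i' = H' - \sum_{j \neq i} E_j'$ gives four more. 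Together these eight relations (written in the primed standard basis) overdetermine the $5 \times 5$ matrix $M$ but are consistent; solving them — for instance, summing or differencing the $F_i$ relations to isolate $M(H)$ and each $M(E_i)$ — yields the columns of $M$. I expect $M(H) = 3H' - 2\sum_i E_i'$ and $M(E_i) = H' - \sum_{j \neq i} E_j'$, matching the displayed matrix.

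Next I would obtain $\check M$ on curves. The cleanest route is to use the defining duality $D \cdot C = M D \cdot \check M C$ together with the intersection pairings on the standard bases: on $N^1 \times N_1$ one has $H \cdot h = 1$, $E_i \cdot e_i = -1$, and all other standard pairings zero (the same on $X'$). Writing the pairing as a matrix $J = \operatorname{diag}(1, -1, -1, -1, -1)$, the duality condition reads $J = M^{\mathsf T} J \check M$, so $\check M = J^{-1} (M^{\mathsf T})^{-1} J = J (M^{\mathsf T})^{-1} J$ since $J = J^{-1}$. Thus $\check M$ is determined by $M$ via a transpose-inverse conjugated by the intersection form; I would either invert $M$ and conjugate, or—equivalently and perhaps more transparently—track strict transforms of the curve classes $h, e_i$ geometrically and check against this formula. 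The general line $h$ transforms to $3h' - \sum_i e_i'$-type expressions, and the exceptional line classes $e_i$ transform according to how $\Crb$ exchanges the roles of $E_i$ and $F_i$; the displayed $\check M$ records $\check M(h) = 3h' - \sum_i e_i'$ and the appropriate images of the $e_i$.

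The main obstacle is not any single hard computation but getting the bookkeeping exactly right: one must be careful about the distinction between $X$ and $X'$ bases (the whole point of the primed notation), about the sign conventions in the exceptional pairing $E_i \cdot e_i = -1$, and about the fact that $M$ acts on divisor classes by strict transform whereas $\check M$ is defined only indirectly through the pairing. The cleanest and least error-prone presentation is to establish $M$ from the two stated geometric facts about $F_i$ and $E_i$, and then to \emph{derive} $\check M$ purely formally from the relation $J = M^{\mathsf T} J \check M$, so that the consistency of the two displayed matrices becomes a single finite verification $M^{\mathsf T} J \check M = J$ rather than two independent geometric computations.
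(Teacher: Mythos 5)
Your proposal is correct and follows essentially the same route as the paper: the paper likewise derives $M$ from the relations $F_i \sim H - \sum_{j\neq i} E_j$ together with the facts that $\Crb$ sends $F_i$ to $E_i'$ and $E_i$ to $F_i'$ (summing over $i$ to isolate $M(H) = 3H' - 2\sum_j E_j'$), and then determines $\check M$ purely formally from the intersection-form identity $M^{\mathsf T} I_{1,4} \check M = I_{1,4}$, which is exactly your relation $J = M^{\mathsf T} J \check M$. The only cosmetic difference is that you note the redundancy of the eight relations and sketch a geometric cross-check of $\check M(h)$, neither of which is needed.
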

\begin{proof}
  For every \(i\), we may write \(H \sim F_i + \sum_{j \neq i} E_j\),
  and so \(4 H \sim \sum_{i=1}^4 F_i + 3 \sum_{j=1}^4
  E_j\). Similarly, \(4 H^\prime \sim \sum_{i=1}^4 F_i^\prime + 3
  \sum_{j=1}^4 E_j^\prime\).  Taking strict transforms yields \(4 M(H)
  = \sum_{i=1}^4 E_i^\prime + 3 \sum_{j=1}^4 F_j^\prime \), and \(4
  M(H) - 12 H^\prime = -8 \sum_{j=1}^4 E_i^\prime\).  This gives
  \(M(H) = 3H^\prime - 2 \sum_{j=1}^4 E_j^\prime\), which is the first
  column of \(M\).  For the other columns, write \(M(E_i) = F_i^\prime
  = H^\prime - \sum_{j \neq i} E_j^\prime\).  The matrix for
  \(\check{M}\) is then determined by \(M^t I_{1,4} \check{M} = I_{1,4}\),
  which \(I_{1,4}\) is a \(5 \times 5\) diagonal matrix with diagonal
  entries \((1,-1,-1,-1,-1)\).
\end{proof}

If \(\bp \in \Sigma\) is a set of \(k \geq 4\) points in linear
general position (i.e.\ with no more than three coplanar), there is a
Cremona transformation \(\Cr_\bp : \P^3 \rat \P^3\) defined as
\(g^{-1} \circ \Cr \circ g\), where \(g\) is an automorphism \(\P^3\)
which sends \(p_1,\ldots,p_4\) to the standard coordinate points.
This is well-defined, since the parameter space \(\Sigma\)
parametrizes points only up to automorphism.  This Cremona
transformation centered at the first four points induces a birational
map which is an isomorphism in codimension \(1\), again denoted by
\(\Crb_\bp : X_{\bp} \rat X_{\bq}\), where \(\bq =
(p_1,\ldots,p_4,\Cr_\bp(p_5), \ldots,\Cr_\bp(p_k))\).
\begin{corollary}
\label{formula}
Suppose that \(\bp\) is a \(k\)-tuple in \(\P^3\) with no four
points coplanar and consider the map \(\Crb : X_{\bp} \rat
X_{\bq}\) induced by a standard Cremona transformation centered
at the first four points.  
\begin{enumerate}
\item If \(D\) is any divisor on \(X_{\bp}\), then \[[\Crb_\bp(D)] =
  \left( \begin{array}{c|c} M & 0 \\\hline 0 & I_{k-4} \end{array}
  \right)([D]),\] where \(\Crb_\bp(D)\) denotes the strict transform of
  \(D\).
\item If \(C\) is any curve on \(X_{\bp}\) which does not meet the
  curves \(\bar{l}_{ij}\) which make up the indeterminacy locus of
  \(\Crb\), then \[[\Crb_\bp(C)] = \left( \begin{array}{c|c} \check{M}
      & 0 \\\hline 0 & I_{k-4} \end{array} \right)([C]),\] where
  \(\Crb_\bp(C)\) denotes the strict transform of \(C\).
\end{enumerate}
\end{corollary}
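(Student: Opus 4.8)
The plan is to reduce both statements to Lemma~\ref{formulai} by exploiting the fact that \(\Crb_\bp\) is centered only at the first four points, so that it acts by the \(4\)-point matrices \(M\), \(\check M\) on the classes \(H, E_1,\ldots,E_4\) and fixes the remaining exceptional data. First I would observe that the hypothesis that no four of the points are coplanar is exactly what makes the situation ``local'' in the required sense: if some \(p_i\) with \(i \geq 5\) lay on a line \(l_{jk}\) through two of the first four points, then \(p_i, p_j, p_k\) would be collinear and hence coplanar with any further point, contradicting the hypothesis. Thus each \(p_i\) with \(i \geq 5\) lies off the indeterminacy locus \(\bigcup l_{jk}\) of \(\Cr_\bp\), and \(\Cr_\bp\) is a local isomorphism near \(p_i\).

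For part (1), I would split \(N^1(X_\bp)\) as \(\langle H, E_1,\ldots,E_4\rangle \oplus \langle E_5,\ldots,E_k\rangle\) and treat each summand. On the second summand, since \(\Cr_\bp\) is a local isomorphism near each \(p_i\) with \(i \geq 5\), blowing up \(p_i\) and taking the strict transform carries \(E_i\) isomorphically to the exceptional divisor \(E_i^\prime\) over \(\Cr_\bp(p_i)\), so \(M(E_i) = E_i^\prime\) and this block is \(I_{k-4}\). On the first summand, I would run the computation of Lemma~\ref{formulai} verbatim; the only point needing verification is that the plane \(F_i\) through the three of the first four points other than \(p_i\) still has class \(H - \sum_{j\neq i,\, j\leq 4} E_j\) on \(X_\bp\), i.e.\ that it avoids \(p_5,\ldots,p_k\). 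This is again immediate from the no-four-coplanar hypothesis, since a fourth point on such a plane would produce four coplanar points. With the relations \(H \sim F_i + \sum_{j\neq i,\, j\leq 4} E_j\) and \(M(E_i) = F_i^\prime\) in hand, the argument of Lemma~\ref{formulai} yields the upper-left block \(M\). (Equivalently, one may realize \(X_\bp\) as the blow-up at \(p_5,\ldots,p_k\) of the four-point model of Lemma~\ref{formulai}; the flop lifts across these centers because they avoid the flopping curves, making the block structure manifest.)

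For part (2), I would use that \(\Crb_\bp\) is an isomorphism in codimension \(1\), so that for a curve \(C\) avoiding the indeterminacy locus \(\bigcup \bar{l}_{ij}\) the strict transform \(\Crb_\bp(C)\) is a genuine curve and \(\Crb_\bp\) restricts to an isomorphism on a neighborhood of \(C\). Passing to a common resolution and applying the projection formula, one gets \(D \cdot \Crb_\bp(C) = (\Crb_\bp^{-1})_\ast D \cdot C\) for every divisor \(D\) on \(X_\bq\), where \((\Crb_\bp^{-1})_\ast D\) is the divisor strict transform, which is \(M^{-1} D\) in class (the exceptional divisors over the \(\bar{l}_{ij}\) meet the pullback of \(C\) trivially, since \(C\) avoids their centers). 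Substituting \(D = M D^\prime\) gives \(M D^\prime \cdot \Crb_\bp(C) = D^\prime \cdot C\) for all \(D^\prime\); since the intersection pairing is nondegenerate and \(M\) is an isomorphism, \([\Crb_\bp(C)]\) is forced to equal the class \(\check M[C]\) determined by the defining relation \(D^\prime \cdot C = M D^\prime \cdot \check M C\) of Lemma~\ref{formulai}. Because this pairing is block-diagonal for the standard bases (the first-four classes pair only among themselves, and likewise \(E_i, e_i\) for \(i \geq 5\)), the induced matrix is block-diagonal with \(\check M\) on the first block and \(I_{k-4}\) on the second.

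The main obstacle is the curve case: one must genuinely invoke the hypothesis that \(C\) avoids the indeterminacy locus, since a curve meeting some \(\bar{l}_{ij}\) can have a strict transform whose class is not given by \(\check M\). The other point to get right is the double role of the no-four-coplanar hypothesis, namely that it simultaneously keeps the extra points off the indeterminacy lines and keeps the auxiliary planes \(F_i\) off the extra points; everything else is the bookkeeping already carried out in Lemma~\ref{formulai}.
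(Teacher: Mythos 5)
Your proposal is correct and takes essentially the same route as the paper: part (1) is reduced to Lemma~\ref{formulai} together with the observation that the exceptional divisors \(E_i\) with \(i > 4\) are carried to \(E_i'\) (giving the \(I_{k-4}\) block), and part (2) follows from the invariance of intersection numbers for curves avoiding the indeterminacy locus combined with the fact that the block matrix built from \(\check{M}\) is the dual map preserving the intersection pairing. Your write-up simply makes explicit some details the paper leaves implicit, such as the double use of the no-four-coplanar hypothesis (keeping \(p_5,\ldots,p_k\) off the lines \(l_{jk}\) and off the planes \(F_i\)) and the common-resolution/projection-formula justification of the intersection-number invariance.
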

\begin{proof}
  The strict transform of \(E_i\) is \(E_i^\prime\) for \(i > 4\), so
  the coefficients on these divisors are unaffected, and (1) is just
  Lemma~\ref{formulai}.  (2) follows from the fact that if \(C\) is
  disjoint from the indeterminacy locus of \(\Crb\), the intersection
  of \(C\) with a divisor is unchanged under strict transform, and
  \(\left( \begin{smallmatrix} \check{M} & 0 \\ 0 & I_{k-4} \end{smallmatrix}
  \right)\) is the linear map which preserves the intersection form.
\end{proof}

We now focus on the case that \(k=9\) points are blown up.
If \(I\) is a \(4\)-tuple from among the nine points, there is a
birational map \(\Cr_I : \P^3 \rat \P^3\) defined as a standard
Cremona transformation centered at the first four points of \(I\),
inducing a birational map \(\Crb_I : X_\bp \to X_\bq\) which is an
isomorphism in codimension \(1\).  Given a sequence \(\bI =
(I_1,\ldots,I_n)\) of \(4\)-tuples from among the nine points, the
composition \(\Cr_\bI = \Cr_{I_n} \circ \cdots \circ \Cr_{I_1}\) is
not defined in general; four of the points might become coplanar under
some \(\Cr_{I_{j-1}}\).  However, if \(\bp = \bp_0\) is in very
general position, arbitrary compositions of Cremona transformations
are defined.  When the composition is defined, we write \(\Crb_{I_j} :
X_{\bp_{j-1}} \rat X_{\bp_j}\) for the induced birational maps of
the blow-ups, and \(\Crb_{\bI} : X_{\bp_0} \rat X_{\bp_n}\) for their
composition.

If \(\bar{\ell} \subset X_\bp\) is the strict transform of a line
through \(p_1\) and \(p_2\), the numerical class of its strict
transform under \(\Crb_\bI\) could be computed using
Corollary~\ref{formula} if it were known that the strict transform of
\(\bar{\ell}\) under \(\Crb_{I_{k-1}} \circ \cdots \circ \Crb_{I_1}\)
is disjoint from the indeterminacy locus of \(\Crb_{I_k}\) for every
\(k \leq n-1\).  Laface and Ugaglia have shown that this is indeed the
case for very general blow-ups.  The strategy of the proof is to
specialize to the situation where the points lie on a genus \(1\)
curve, and reduce the claimed disjointness to the nonvanishing of
certain combinations of the points in the Picard group.


\begin{theorem}[\cite{lu2}, Proposition 2.7]
\label{iterate}
Let \(\bI = (I_1,\ldots,I_n)\) be a finite sequence of \(4\)-tuples, and
let \(\ell\) be the line in \(\P^3\) between \(p_1\) and \(p_2\), with
\(\bar{\ell}\) its strict transform on \(X=X_{\bp }\).  There
exists an open subset \(U_\bI \subset \Sigma\) such that if \(\bp\)
is contained in \(U_\bI\), the following hold:
\begin{enumerate}
\item The composition \(\Cr_{\bI} : \P^3 \rat \P^3\)
  is well-defined.
\item If \(\bar{\ell}\) is not contained in the indeterminacy locus of
  \(\Crb_{\bI}\), then for each \(1 \leq j \leq n\), the strict
  transform \(\bar{\ell}_{j-1} \subset X_{\bp_{j-1}}\) is disjoint
  from the indeterminacy locus of \(\Crb_{I_j}\).
\end{enumerate}
\end{theorem}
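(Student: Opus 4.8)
The plan is to prove both parts by a single specialization argument: each conclusion can fail only on a closed subset of $\Sigma$, so to produce the desired open set $U_\bI$ it suffices to exhibit one configuration at which everything works. For part (1), the map $\Cr_\bI$ is defined as soon as, at every stage $j$, the four centers of $I_j$ are non-coplanar in the configuration $\bp_{j-1} = \Cr_{I_{j-1}} \circ \cdots \circ \Cr_{I_1}(\bp)$; since coplanarity of four points is cut out by the vanishing of a determinant, each of these is an open condition, and there are only finitely many. For part (2), fix $j$ and consider the six lines $\bar l_{ab}$ joining pairs of the centers of $I_j$, which make up the indeterminacy locus of $\Crb_{I_j}$. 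As $\bp$ varies, the strict transforms $\bar\ell_{j-1}$ and the $\bar l_{ab}$ sweep out flat families of curves over $\Sigma$, and the locus where two such families meet is closed by properness. Thus $U_\bI$ may be taken to be the complement of the finitely many closed sets so obtained, and the problem reduces to finding a single $\bp$ at which $\Cr_\bI$ is defined and, assuming $\bar\ell$ is not contained in the indeterminacy locus of $\Crb_\bI$, every $\bar\ell_{j-1}$ avoids the curves $\bar l_{ab}$.

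Following \cite{lu2}, such a configuration comes from placing the points on a smooth genus $1$ curve. I would take $E \subset \P^3$ to be an elliptic normal quartic (or another smooth genus $1$ curve spanning $\P^3$) and choose $p_1,\ldots,p_9 \in E$ to be very general on $E$, meaning that the $p_i$ satisfy no integral linear relation in $\Pic^0(E)$ beyond those forced by the embedding. The structural fact that makes this tractable is that a standard Cremona transformation centered at four of the points carries $E$ to its strict transform, again a smooth genus $1$ curve passing through the transformed points, and that this is compatible with the restriction homomorphism $N^1(X_{\bp_{j-1}}) \to \Pic(E)$ determined by $H \mapsto H|_E$ and $E_i \mapsto [p_i]$. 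Consequently the positions on $E$ of the nine points after $j$ steps are explicit integral combinations of the original $p_i$ in $\Pic(E)$, read off from the matrix of Corollary \ref{formula}, and every incidence between the tracked curves can in principle be expressed through the group law on $E$.

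With this dictionary the two kinds of condition translate into the nonvanishing of explicit classes in $\Pic^0(E)$. Non-coplanarity is immediate: four points $p_a,p_b,p_c,p_d$ on $E$ are coplanar exactly when $p_a+p_b+p_c+p_d \sim H|_E$, since a plane cuts out a degree $4$ divisor on $E$, so the non-coplanarity required at stage $j$ becomes a single relation $\sum_i c_i p_i \not\sim 0$ with $\sum_i c_i = 0$. The disjointness condition is recorded the same way: a point of $\bar\ell_{j-1} \cap \bar l_{ab}$ forces an incidence among the curves obtained from $E$ and the $p_i$ by the tracked Cremona action, and this incidence is witnessed by the vanishing of a specific element $\delta_{j,ab} = \sum_i c_i p_i \in \Pic^0(E)$. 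Only finitely many such $\delta_{j,ab}$ arise, so for $p_1,\ldots,p_9$ very general on $E$ none of them vanishes; the chosen configuration then lies in $U_\bI$, and the closed failure loci are proper.

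The main obstacle is the final translation, namely turning a set-theoretic meeting of $\bar\ell_{j-1}$ and $\bar l_{ab}$ on the three-fold $X_{\bp_{j-1}}$ into a genuine relation in $\Pic^0(E)$. Because two curves in a three-fold meet only in codimension one, one must separate a \emph{free} intersection point away from the $p_i$, which restricts directly to a coincidence of points on $\bar E$, from an intersection taking place inside an exceptional divisor, which instead compares tangent directions at a common point $p_i$; and one must keep this incidence bookkeeping consistent across the entire sequence $\bI$ while checking that each resulting class is nonzero for general points of $E$. This is the delicate part of the argument, and it is exactly where I would follow the computations of \cite{lu2} most closely.
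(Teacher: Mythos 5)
The paper does not prove this statement at all---it is quoted directly from Laface--Ugaglia \cite{lu2}, with only a one-sentence summary of their method: specialize to the situation where the nine points lie on a genus $1$ curve, and reduce the claimed disjointness to the nonvanishing of certain combinations of the points in the Picard group. Your proposal reconstructs exactly that strategy (openness of the failure loci plus exhibiting a single good configuration on an elliptic quartic, with coplanarity and incidence conditions translated into nonvanishing of classes in $\Pic^0(E)$), so it takes essentially the same approach as the cited proof, and the delicate bookkeeping you explicitly defer to \cite{lu2} is precisely what that reference carries out.
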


We now consider compositions of Cremona transformations centered at
judiciously chosen sequences of quadruples from the among nine points.
Let \(\sigma \in S_9\) be the permutation \((6,7,8,9,1,2,3,4,5)\), and
take \(I_j = (\sigma^{-j}(1),\ldots,\sigma^{-j}(4))\).
The composition \(\Cr_{I_j} \circ \cdots \circ \Cr_{I_1}\) could
equivalently be realized by repeatedly making a Cremona transformation
centered at \(p_6,\ldots,p_9\) and then cyclically permuting the
indices so these points become \(p_1,\ldots,p_4\).

Let \(X = X_\bp\) be the blow-up at a very general configuration
\(\bp\).  Define \(M_\sigma : N^1(X) \to N^1(X)\) and \(\check{M}_\sigma : N_1(X)
\to N_1(X)\) by
\[
M_\sigma = 
\left( \begin{array}{c|c}
M & 0 \\\hline
0 & I_5
\end{array}
\right)
\left( \begin{array}{c|c}
1 & 0 \\\hline
0 & \Pi_\sigma
\end{array}
\right), \quad \check{M}_\sigma = 
\left( \begin{array}{c|c}
\check{M} & 0 \\\hline
0 & I_5
\end{array}
\right)
\left( \begin{array}{c|c}
1 & 0 \\\hline
0 & \Pi_\sigma
\end{array}
\right),
\]
where \(\Pi_\sigma\) is the permutation matrix for \(\sigma\).  The
class of the strict transform of a divisor \(D\) under \(\Crb_{I_n}
\circ \cdots \circ \Crb_{I_1}\) is \(\left( \begin{smallmatrix} 1 & 0
    \\ 0 &\Pi_\sigma \end{smallmatrix} \right)^{-n} M_\sigma^n([D])\).
Since \(D\) is movable and each \(\Crb_{I_j}\) is an isomorphism in
codimension \(1\), this strict transform is a movable divisor as well.
This strict transform is a divisor on a different blow-up \(X_{\bq}\)
(as in Section~\ref{exampleone}), but if \(\bp\) is very general then
by Lemma~\ref{points} this defines a movable class on \(X\) as well,
and so \(M_\sigma(\Movb(X)) = \Movb(X)\).  Thus \(M_\sigma : N^1(X)
\to N^1(X)\) is a linear map which preserves the effective and movable
cones.  Similarly, if \(C\) is a curve with strict transforms disjoint
from the indeterminacy loci of each \(\Crb_k\), its strict transform
has class \(\left( \begin{smallmatrix} 1 & 0 \\ 0
    &\Pi_\sigma \end{smallmatrix} \right)^{-n} \check{M}_\sigma^n([C])\) by
Corollary~\ref{formula}. The following lemma summarizes the essential
properties of \(M_\sigma\).
\begin{lemma}
\label{eigenstuff}
The linear transformation \(M_\sigma\) has characteristic polynomial
\(p(t) = (t+1)(t-1) \, t^4 q(t+t^{-1})\), where \(q(t) = t^4 - 3t^3 +
4t -1\). \(M_\sigma\) has four real eigenvalues: \(1\), \(-1\),
\(\lambda \approx 1.800\) and \(1/\lambda\).  When the
\(\lambda\)-eigenvector \(D_\lambda\) is written as \(H - \sum r_i
E_i\), the first two coefficients satisfy \(r_1+r_2 > 1\).
\end{lemma}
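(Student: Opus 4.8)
The plan is to exploit the fact that $M_\sigma$ is an isometry of a symmetric bilinear form of signature $(1,9)$, which forces its eigenvalues to occur in reciprocal pairs. First I would check that the Cremona matrix $M$ of Lemma~\ref{formulai} preserves the form given by $B=\operatorname{diag}(2,-1,-1,-1,-1)$, that is $M^{t}BM=B$; this is a direct $5\times 5$ computation. Since the permutation factor $\left(\begin{smallmatrix}1&0\\0&\Pi_\sigma\end{smallmatrix}\right)$ fixes the $H$-coordinate and permutes the $E_i$-coordinates, it preserves the analogous $10\times 10$ form $\tilde B=\operatorname{diag}(2,-1,\dots,-1)$, and hence so does the product $M_\sigma$. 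From $M_\sigma^{t}\tilde B M_\sigma=\tilde B$ one gets $M_\sigma^{-1}=\tilde B^{-1}M_\sigma^{t}\tilde B$, which is conjugate to $M_\sigma^{t}$ and hence to $M_\sigma$; so the characteristic polynomial of $M_\sigma$ equals that of $M_\sigma^{-1}$ and is therefore reciprocal (its roots are invariant under $\lambda\mapsto 1/\lambda$). This already dictates the shape of the answer: $t=1$ and $t=-1$ are forced roots, and after dividing them out the remaining reciprocal factor of degree $8$ can be written uniquely as $t^{4}q(t+t^{-1})$ for a quartic $q$.

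To pin down $q$ I would write out $M_\sigma$ as an explicit $10\times 10$ matrix and compute $\det(tI-M_\sigma)$, using the cyclic block structure to keep the bookkeeping manageable; this yields $q(s)=s^{4}-3s^{3}+4s-1$ and confirms the factorization $(t+1)(t-1)\,t^{4}q(t+t^{-1})$.

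The eigenvalue count then reduces to locating the roots of $q$. Under the substitution $s=t+t^{-1}$, a real root $s$ of $q$ produces a pair of real eigenvalues exactly when $\abs{s}\geq 2$, and otherwise a conjugate pair on the unit circle, while a non-real root of $q$ would produce eigenvalues off the unit circle. Evaluating $q$ at $-2,-1,0,1,2,3$ gives the sign pattern $+,-,-,+,-,+$, exhibiting four sign changes and hence four real roots, one in each of $(-2,-1)$, $(0,1)$, $(1,2)$, $(2,3)$. Thus all four roots of $q$ are real, and exactly one, $s_0\in(2,3)$, satisfies $\abs{s_0}>2$. Consequently $M_\sigma$ has exactly the four real eigenvalues $1$, $-1$, $\lambda=\tfrac12\bigl(s_0+\sqrt{s_0^{2}-4}\bigr)\approx 1.800$, and $1/\lambda$, with every remaining eigenvalue on the unit circle; in particular $\lambda$ is simple and strictly dominates all others in magnitude, which is precisely the input required later for Lemma~\ref{coneextremal}.

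Finally, for the inequality $r_1+r_2>1$ I would solve the linear system $(M_\sigma-\lambda I)D_\lambda=0$, expressing each coefficient $r_i$ of $D_\lambda=H-\sum r_iE_i$ as a rational function of $\lambda$ with rational coefficients; since $\lambda$ is a known algebraic number (a root of $t^{2}-s_0t+1$ with $s_0$ a root of $q$), the quantity $r_1+r_2$ can be evaluated exactly in $\Q(\lambda)$, or to sufficient precision numerically, and compared with $1$. I expect this last step to be the main obstacle: there is no structural shortcut, and it is in fact the arithmetic heart of the example, since $r_1+r_2>1$ says exactly that $D_\lambda$ has negative intersection $1-r_1-r_2$ with the class $h-e_1-e_2$ of a line through the first two points — the kind of negative intersection that ultimately drives Theorem~\ref{main}. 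The only other point demanding care is the root count of $q$ above, which must genuinely rule out a real root less than $-2$ or a non-real root, either of which would give an eigenvalue of magnitude greater than $1$ and destroy the strict dominance of $\lambda$.
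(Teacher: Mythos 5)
Your proposal is correct, and its computational core coincides with the paper's (very terse) proof: the paper likewise just computes the characteristic polynomial, reads off the eigenvalue structure, and expresses the eigenvector coordinates as functions of the dominant eigenvalue to check $r_1+r_2>1$ --- exactly your second, third, and fourth steps, and your sign-pattern bookkeeping for $q$ is accurate ($q(-2),\dots,q(3)$ do give $+,-,-,+,-,+$, hence four real roots with exactly one of absolute value greater than $2$). What you add beyond the paper is the observation that $M_\sigma$ is an isometry of the form $\tilde B=\operatorname{diag}(2,-1,\dots,-1)$; one can check directly that $M^{t}BM=B$ for the $5\times 5$ Cremona block, and this is the $N^1$-side avatar of the relation $M^{t}I_{1,4}\check{M}=I_{1,4}$ appearing in the proof of Lemma~\ref{formulai}. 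This extra structure buys something the paper leaves implicit: it explains \emph{a priori} why the spectrum is invariant under $t\mapsto 1/t$ and, combined with your root count for $q$, why every eigenvalue other than $\lambda$ and $1/\lambda$ lies on the unit circle --- which is precisely the simple-dominant-eigenvalue hypothesis needed later for Lemma~\ref{coneextremal}.

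One assertion is wrong as stated, though it does not endanger your argument: invariance of the spectrum under inversion does \emph{not} force $t=\pm 1$ to be roots (consider $t^{2}-3t+1$, which is reciprocal with roots $(3\pm\sqrt{5})/2$). What does force them is that the characteristic polynomial is \emph{anti}-reciprocal, $p(t)=-t^{10}p(1/t)$, and this holds because $\det M_\sigma=-1$: the Cremona block $M$ is an involution ($M^{2}=I$) of trace $3$, hence has eigenvalues $(1,1,1,1,-1)$ and determinant $-1$, while $\Pi_\sigma$ is a $9$-cycle and therefore an even permutation. Anti-reciprocity then gives $p(1)=-p(1)$ and $p(-1)=-p(-1)$, so both vanish. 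Since you go on to compute $\det(tI-M_\sigma)$ explicitly anyway, the factorization $(t+1)(t-1)\,t^{4}q(t+t^{-1})$ --- and with it the lemma --- stands; but the ``forced roots'' remark should either be deleted or justified via the determinant as above, rather than attributed to reciprocity alone.
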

\begin{proof}
  The claims about the eigenvalues are easily verified from the
  characteristic polynomial.  To obtain the claimed inequality on the
  coefficients, one may express the components of the eigenvector in
  terms of the dominant eigenvalue and compute their approximate
  values.
\end{proof}
\begin{remark}
\label{digits}
To three decimal places, \(\dinf\) is given in components by
\[
\dinf \approx (1, -0.640, -0.634, -0.615, -0.554, -0.355, -0.352, -0.341,
-0.307, -0.197) .
\]
\end{remark}

\begin{remark}
  As in the two-dimensional case of Remark~\ref{weyldisclaimer}, the
  matrix \(\left( \begin{smallmatrix} M & 0 \\ 0 &
      I_{k-4} \end{smallmatrix} \right)\) and permutation matrices
  generate the action of a Coxeter group of type \(T_{2,4,5}\) on
  \(N^1(X_\bp)\).  The eigenvectors of many elements other than the
  \(M_\sigma\) considered above have similar properties, including a
  non-closed diminished base locus.
\end{remark}

\section{The geometry of $\dinf$}
\label{baselocus}

\begin{lemma}
\label{extremal}
  The class \(\dinf\) lies in \(\Movb(X)\) and spans an extremal
  ray on \(\Effb(X)\).
\end{lemma}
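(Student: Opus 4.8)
The plan is to deduce both assertions from Lemma~\ref{coneextremal}, applied once with the cone $G = \Effb(X)$ and once with $G = \Movb(X)$, in each case taking $T = M_\sigma$ and $\lambda$ the eigenvalue $\approx 1.800$ of Lemma~\ref{eigenstuff}, whose eigenvector is $\dinf$.

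First I would verify the hypotheses on the cone $G$ for both choices. Each of $\Effb(X)$ and $\Movb(X)$ is a closed convex cone by construction. Each has nonempty interior, since each contains the ample cone, which is open and nonempty: an ample class is big, and it is movable because a suitable multiple is base-point free. Neither cone contains a line, since the pseudoeffective cone of a projective variety is salient ($D$ and $-D$ both pseudoeffective forces $D \equiv 0$), and $\Movb(X) \subseteq \Effb(X)$ inherits this. Finally, the invariance $M_\sigma(\Effb(X)) = \Effb(X)$ and $M_\sigma(\Movb(X)) = \Movb(X)$ was already recorded in Section~\ref{cremona}.

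The crux is to confirm that $\lambda$ has algebraic multiplicity one and strictly dominates every other eigenvalue in magnitude. I would read this off the characteristic polynomial $(t+1)(t-1)\,t^4 q(t + t^{-1})$ from Lemma~\ref{eigenstuff}. The roots contributed by $t^4 q(t+t^{-1})$ come in reciprocal pairs $\set{t, t^{-1}}$ governed by $t + t^{-1} = s$ as $s$ ranges over the roots of $q(s) = s^4 - 3s^3 + 4s - 1$: a root $s$ with $\abs{s} > 2$ gives a real pair off the unit circle, while $\abs{s} < 2$ gives a conjugate pair on the unit circle (since then $\abs{t}^2 = (s^2 + (4-s^2))/4 = 1$). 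Locating the sign changes of $q$ shows it has four real roots, exactly one of which (near $2.356$) lies outside $[-2,2]$; this root produces $\lambda \approx 1.800$ and $\lambda^{-1} \approx 0.556$, while the other three produce six eigenvalues of magnitude exactly $1$. Together with the eigenvalues $\pm 1$, this accounts for all ten eigenvalues and shows every eigenvalue other than the simple root $\lambda$ has magnitude at most $1 < \lambda$.

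With the hypotheses checked, Lemma~\ref{coneextremal} applies verbatim. Taking $G = \Effb(X)$ gives that $\dinf$ (already normalized to the pseudoeffective sign, as in Remark~\ref{digits}) spans an extremal ray on $\Effb(X)$; taking $G = \Movb(X)$ gives in particular that $\dinf \in \Movb(X)$. The one genuine obstacle is the eigenvalue bookkeeping: one must be sure that $q$ really has four real roots with only one outside $[-2,2]$, so that no complex eigenvalue drifts off the unit circle and rivals $\lambda$ in magnitude. This reduces to a short explicit check on the quartic $q$.
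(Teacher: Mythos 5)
Your proposal is correct and takes essentially the same approach as the paper: the paper's proof likewise applies Lemma~\ref{coneextremal} with \(V = N^1(X)\), \(T = M_\sigma\), and \(G = \Movb(X)\), \(G = \Effb(X)\) respectively, citing Lemma~\ref{eigenstuff} for the dominant-eigenvalue hypothesis. Your explicit verification of the cone hypotheses and the sign-change analysis of \(q\) (placing the six remaining eigenvalues on the unit circle) simply fills in details that the paper delegates to Lemma~\ref{eigenstuff} and leaves to the reader.
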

\begin{proof}
  The two claims follow from Lemma~\ref{coneextremal} by taking \(V =
  N^1(X)\) and \(T = M_\sigma\), with \(G = \Movb(X)\) and \(G =
  \Effb(X)\) respectively. The hypothesis on the dominant eigenvalue
  is verified in Lemma~\ref{eigenstuff}.
\end{proof}

\begin{lemma}[$=$ Theorem~\ref{main}, (i), (ii)]
\label{negcurves}
If \(\bp\) is very general, there is an infinite set of curves \(C_n
\subset X = X_\bp\) such that \(\dinf \cdot C_n < 0\), and
\(\Bm(\dinf)\) is not closed.  The curves \(C_n\) are Zariski dense on
\(X\).
\end{lemma}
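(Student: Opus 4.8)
The plan is to realize the $C_n$ as strict transforms of one line under the iterated Cremona maps and to extract the sign of $\dinf \cdot C_n$ from the eigenvector picture of Lemma~\ref{eigenstuff}. First I would take $C_0 = \bar\ell$, the strict transform on $X$ of the line through $p_1$ and $p_2$, whose class in the standard basis of $N_1(X)$ is $h - e_1 - e_2$. Since $\dinf = H - \sum r_i E_i$ and $H \cdot h = 1$, $E_i \cdot e_j = -\delta_{ij}$, one computes $\dinf \cdot C_0 = 1 - r_1 - r_2 < 0$ by the inequality $r_1 + r_2 > 1$ of Lemma~\ref{eigenstuff}. I would then let $C_n$ be the strict transform of $\bar\ell$ under $\Crb_{I_n} \circ \cdots \circ \Crb_{I_1}$. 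By Theorem~\ref{iterate}, for very general $\bp$ each intermediate strict transform avoids the indeterminacy locus of the next Cremona map, so Corollary~\ref{formula}(2) applies at every stage and the class of $C_n$ equals $\check{M}_\sigma^{\,n}([C_0])$; via the identifications of Lemma~\ref{points}, legitimate because $\bp$ and its images are very general, this class is represented by an honest irreducible curve on the single fixed $X$.

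The key computation is then immediate. Because $\dinf$ is the $\lambda$-eigenvector of $M_\sigma$ and both $M_\sigma$ and $\check{M}_\sigma$ preserve the intersection pairing, i.e.\ $M_\sigma D \cdot \check{M}_\sigma C = D \cdot C$,
\[
\dinf \cdot C_n = \bigl( \lambda^{-n} M_\sigma^{\,n} \dinf \bigr) \cdot \check{M}_\sigma^{\,n}([C_0]) = \lambda^{-n}\, \dinf \cdot C_0 < 0 .
\]
Hence $C_n \subset \Bm(\dinf)$ by Lemma~\ref{bminusbasic}(3). The classes $\check{M}_\sigma^{\,n}([C_0])$ are pairwise distinct, since $\check{M}_\sigma$ has the dominant eigenvalue $\lambda > 1$ and $C_0$ has nonzero component along the corresponding eigenvector (a finite check from the explicit $C_0$), so the $C_n$ constitute infinitely many distinct curves.

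Non-closedness is now formal: by Lemma~\ref{extremal} we have $\dinf \in \Movb(X)$, so every component of $\Bm(\dinf)$ has codimension at least $2$ by Lemma~\ref{bminusbasic}(7). A Zariski-closed subset of the threefold $X$ whose components all have dimension at most $1$ contains only finitely many irreducible curves, whereas $\Bm(\dinf)$ contains the infinitely many distinct $C_n$; thus $\Bm(\dinf)$ cannot be closed. For density I would argue by contradiction. If $\bigcup_n C_n$ had proper closure, then by the pigeonhole principle infinitely many $C_n$ would lie on a single irreducible surface $S \subset X$ (an irreducible curve cannot contain infinitely many distinct curves). As $S$ has codimension one it is not contained in $\Bm(\dinf)$, so $\B(\dinf + A)$ does not contain $S$ for any small ample $A$ and $\dinf\vert_S$ is pseudoeffective. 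Passing to a resolution $\nu : \tilde S \to S$, the pseudoeffective class $\nu^\ast(\dinf\vert_S)$ has negative intersection with only finitely many irreducible curves on $\tilde S$, namely the components of the negative part of its Zariski decomposition; but the strict transforms of the infinitely many $C_n \subset S$ all satisfy $\nu^\ast(\dinf\vert_S) \cdot \tilde C_n = \dinf \cdot C_n < 0$, a contradiction. Hence $\bigcup_n C_n$ is Zariski dense.

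The main obstacle is the construction step, namely realizing the numerical classes $\check{M}_\sigma^{\,n}([C_0])$ by actual curves on one very general $X$; this is exactly what the disjointness statement of Theorem~\ref{iterate}, together with the cone identifications of Lemma~\ref{points}, is designed to supply. Conceptually, the density step carries the real content, with the finiteness of negative curves for a pseudoeffective class on a surface providing the contradiction.
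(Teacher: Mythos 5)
Your overall architecture matches the paper's --- the eigenvector intersection computation, non-closedness from movability plus Lemma~\ref{bminusbasic}(7), and density via the finiteness of curves negative against a pseudoeffective class on a surface --- but there is a genuine gap in the construction step, exactly at the point you yourself flag as the main obstacle. Pushing $\bar\ell$ forward under $\Crb_{I_n} \circ \cdots \circ \Crb_{I_1}$ produces a curve on the \emph{different} blow-up $X_{\bp_n}$, not on $X$. To get back to $X$ you invoke Lemma~\ref{points} to ``represent this class by an honest irreducible curve on the single fixed $X$,'' but Lemma~\ref{points} only identifies the divisor cones $\Eff$, $\Effb$, $\Movb$ inside $N^1$ for very general pairs of configurations; it says nothing about classes in $N_1$, and in particular does not assert that a class represented by an irreducible curve on one very general blow-up is so represented on another. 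Since $\Bm(\dinf)$ and the inequalities $\dinf \cdot C_n < 0$ must be interpreted on $X$ itself, this transfer is precisely what needs proof. A rescue along your lines would require (a) a curve analogue of Lemma~\ref{points}, proved by a countability argument on the relative Hilbert scheme of $\cX / \Sigma$, and (b) the observation that $\bp_n = \rho^n(\bp)$ is again very general for every $n$ (using that the self-map $\rho$ of $\Sigma$ induced by the Cremona sequence is dominant); neither is available as a citation in the paper.

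The paper sidesteps the issue with a small but essential trick: it runs the Cremona sequence \emph{backwards}. Using Theorem~\ref{iterate} it chooses configurations $\bp_j$ for all integers $j$, with $\bp_0 = \bp$, takes on $X_{\bp_{-n}}$ the strict transform of the line through $p_{\sigma^n(1)}$ and $p_{\sigma^n(2)}$ (which avoids the indeterminacy locus of the first map $\Crb_{I_{-n+1}}$, because $\sigma^n(1) = \sigma^{n-1}(5)$ and $\sigma^n(2) = \sigma^{n-1}(6)$ while that map is centered at $p_{\sigma^{n-1}(1)}, \ldots, p_{\sigma^{n-1}(4)}$), and pushes it forward $n$ steps so that the resulting curve $C_n$, of class $\check{M}_\sigma^n([C_0])$, lies on $X = X_{\bp_0}$ itself. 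No identification between distinct blow-ups is then needed --- only Theorem~\ref{iterate} and Corollary~\ref{formula}. The remainder of your argument, namely the computation $\dinf \cdot C_n = \lambda^{-n}(\dinf \cdot C_0) < 0$, the deduction of non-closedness, and the density argument on a resolution of a hypothetical surface containing infinitely many $C_n$, is correct and agrees with the paper's proof.
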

\begin{proof}
  The strategy is to construct curves \(C_n\) in the classes
  \(\check{M}_\sigma^n([C_0])\), where \(C_0\) is a line through \(p_1\) and
  \(p_2\).  By Theorem~\ref{iterate}, we can find a sequence of
  configurations \(\bp_j\), defined for all integers \(j\), with
  \(\bp_0 = \bp\) and such that the maps \(\Crb_{I_j} : X_{\bp_{j-1}}
  \rat X_{\bp_j}\) are defined for all \(j\).  We may additionally
  assume that if \(\ell \subset X_{\bp_j}\) is a line not contained in
  the indeterminacy locus of \(\Crb_{I_{j+1}}\), then for all \(k \geq
  0\) the strict transform of \(\ell\) on \(X_{\bp_{j+k}}\) is
  disjoint from the indeterminacy locus of \(\Crb_{j+k+1}\).

  Suppose that \(\bar{\ell} \subset X_{\bp_{-n}}\) is the strict
  transform of a line between \(p_i\) and \(p_j\).  By
  Theorem~\ref{iterate}, as long as \(p_i\) and \(p_j\) are not among
  the base points of \(\Cr_{I_1}\), the composition \(\Crb_{I_n} \circ
  \cdots \circ \Crb_{I_1} \) is well-defined for all \(n\), and 
  the strict transforms of \(\bar{\ell}\) are disjoint from the
  indeterminacy loci of the maps \(\Crb_{I_j}\). 
  Taking \(\bar{\ell}\) to be the line between \(p_{\sigma^{n}(1)}\)
  and \(p_{\sigma^{n}(2)}\) on \(X_{\bp_{-n}}\), we thus obtain a
  curve \(C_n \subset X\) with class \(\check{M}_\sigma^n([C_0])\), where
  \([C_0] = h - e_1 - e_2\) is the class of a line through the first
  two points. Note that \(\Crb_{I_{-n+1}} : X_{\bp_{-n}} \rat
  X_{\bp_{-n+1}}\) is centered at
  \(p_{\sigma^{n-1}(1)},\ldots,p_{\sigma^{n-1}(4)}\).  Since
  \(\sigma^n(1) = \sigma^{n-1}(5)\) and \(\sigma^n(2) =
  \sigma^{n-1}(6)\), \(\ell\) is not among the curves in the
  indeterminacy locus of \(\Crb_{I_{-n+1}}\).

  The computation of \(\dinf\) in Lemma~\ref{eigenstuff} gives \(\dinf
  \cdot C_0 = 1-(r_1+r_2) < 0\), and so
 \[ \dinf \cdot C_n = (\lambda^{-n} M_\sigma^n \dinf) \cdot
 (\check{M}_\sigma^n C_0) = \lambda^{-n} (\dinf \cdot C_0) < 0. \] 

 By (3) of Lemma~\ref{bminusbasic}, each curve \(C_{n}\) is contained
 in \(\Bm(\dinf)\).  However, \(\dinf\) is movable and so
 \(\Bm(\dinf)\) contains no divisors by (7) of the same lemma.  It
 follows that \(\Bm(\dinf)\) is a countable union of curves.

 We now show that the curves are Zariski dense.  Suppose that \(S
 \subset X\) is any surface, and let \(\psi : \tilde{S} \to X\) be the
 inclusion of a resolution of \(S\).  Since \(D_\lambda\) is movable,
 \(S\) is not contained in the base locus of \(D_\lambda + A\) for any
 ample \(A\), and thus \(\psi^\ast(D_\lambda)\) is pseudoeffective.
 If \(C_n \subset S\), then a curve \(\bar{C}_n \subset \tilde{S}\)
 mapping finitely to \(C_n\) has \((\psi^\ast(D_\lambda) \cdot
 \bar{C}_n)_{\tilde{S}} = (D_\lambda \cdot C_n)_X < 0\).  However, a
 pseudoeffective \(\R\)-divisor on a smooth surface can have negative
 intersection with only finitely many curves, namely those in the
 support of the negative part of its Zariski decomposition (recalled
 in Theorem~\ref{zdecomp}).  Thus only finitely many of the curves
 \(C_n\) are contained in any surface.
\end{proof}

The first few classes \([C_n] = \delta h - \sum_i \mu_i e_i\) are
given below.
\[
\begin{array}{r|r|rrrrrrrrr}
n  & \delta & \mu_1 & \mu_2 & \mu_3 & \mu_4 & \mu_5 & \mu_6 & \mu_7 & 
\mu_8 & \mu_9 \\\hline
0 & 1 & 1 & 1 & 0 & 0 & 0 & 0 & 0 & 0 & 0  \\
1 & 3 & 1 & 1 & 1 & 1 & 1 & 1 & 0 & 0 & 0  \\
2 & 7 & 3 & 2 & 2 & 2 & 1 & 1 & 1 & 1 & 1  \\
3 & 13 & 4 & 4 & 4 & 4 & 3 & 2 & 2 & 2 & 1  \\
4 & 25 & 8 & 8 & 8 & 7 & 4 & 4 & 4 & 4 & 3 \\
5 & 45 & 14 & 14 & 14 & 13 & 8 & 8 & 8 & 7 & 4 
\end{array}
\]

On a given variety \(X\), the set of divisors for which \(\Bm(D)\) is
not closed has measure 0 in \(N^1(X)\); all such classes are
\emph{unstable} in the sense of~\cite{elmnp}.  Nevertheless, one
expects that on ``sufficiently complicated'' varieties there should
exist divisors for which \(\Bm(D)\) is not closed.  The following
gives one result in this direction.
\begin{corollary}
  Suppose that \(Y\) is a normal projective threefold.  There exists a
  finite set of points \(q_1,\ldots,q_j\) on \(Y\) such that if \(r :
  Y^\prime \to Y\) is the blow-up of the \(q_i\), there is an
  \(\R\)-divisor \(D\) on \(Y^\prime\) for which \(\Bm(D)\) is not
  closed.
\end{corollary}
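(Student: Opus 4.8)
The plan is to deduce the corollary from the example already in hand on \(X\), the blow-up of \(\P^3\) at nine very general points, by pulling back \(\dinf\) along a finite morphism coming from a Noether normalization of \(Y\). First I would choose a finite surjective morphism \(\phi : Y \to \P^3\); such a morphism exists for any projective threefold as a general linear projection. Since the ground field is uncountable and \(Y\) is normal (so its singular locus has dimension at most \(1\), and hence maps into a curve in \(\P^3\)), I may take \(\phi\) to be separable, and therefore étale over a very general point of \(\P^3\) with fibre contained in the smooth locus of \(Y\). I then fix nine points \(p_1,\ldots,p_9 \in \P^3\) which are very general in the sense required for Theorem~\ref{main} and which in addition lie in the locus over which \(\phi\) is étale with reduced fibre in the smooth locus; both are conditions satisfied off a countable union of proper closed sets, so they are compatible. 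The points to be blown up are taken to be the finite set \(\bigcup_i \phi^{-1}(p_i)\), and I let \(r : Y^\prime \to Y\) be their blow-up.

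Next I would identify \(Y^\prime\) with the fibre product \(Y \times_{\P^3} X\), where \(X \to \P^3\) is the blow-up at the \(p_i\). Because \(\phi\) is étale near each \(p_i\), the inverse-image ideal of \(p_i\) is exactly the product of the maximal ideals of the points of \(\phi^{-1}(p_i)\), so the base change of the blow-up along \(\phi\) is precisely the blow-up of \(Y\) at these finitely many points. The second projection is then a finite surjective morphism \(f : Y^\prime \to X\), and \(Y^\prime\) is again a normal projective threefold. This identification, and the separability of \(\phi\) needed to guarantee it, is the one genuinely technical point of the argument; once it is in place the transfer of non-closedness is formal.

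For the transfer I would set \(D = f^\ast \dinf\). Since \(f\) is finite, \(f^\ast\) carries \(\Effb(X)\) into \(\Effb(Y^\prime)\) and \(\Movb(X)\) into \(\Movb(Y^\prime)\); thus \(D\) is pseudoeffective, and by Lemma~\ref{bminusbasic}(7) every component of \(\Bm(D)\) has codimension at least \(2\), hence is a curve or a point. On the other hand, if \(\tilde C \subset Y^\prime\) is an irreducible curve dominating one of the curves \(C_n \subset X\) of Lemma~\ref{negcurves} with degree \(m\), the projection formula gives \(D \cdot \tilde C = \dinf \cdot f_\ast \tilde C = m\,(\dinf \cdot C_n) < 0\), so Lemma~\ref{bminusbasic}(3) places every such \(\tilde C\), and therefore all of \(f^{-1}(C_n)\), inside \(\Bm(D)\).

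Finally I would conclude by a dimension count. The curves \(C_n\) are Zariski dense in \(X\) by Lemma~\ref{negcurves}, and the preimage under the finite surjection \(f\) of a Zariski-dense set is Zariski dense (if its closure were a proper closed subset, its image under the finite, hence closed and fibre-finite, map \(f\) would be a proper closed subset of \(X\), a contradiction). Hence \(\Bm(D)\) contains the dense set \(\bigcup_n f^{-1}(C_n)\), so its closure is all of \(Y^\prime\); but its components are at most one-dimensional, so \(\Bm(D)\) is not equal to its closure and is therefore not Zariski closed. I expect the only real obstacle to be the fibre-product identification of the previous paragraph, including the positive-characteristic check that a general projection can be chosen separable; the remainder rests only on the preservation of movability under finite pullback and on the fact that finite morphisms send closed sets to closed sets.
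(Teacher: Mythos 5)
Your proposal is correct and takes essentially the same route as the paper: both fix a finite separable map \(Y \to \P^3\), blow up the preimages of nine very general points chosen off the branch locus, and pull back \(\dinf\) along the resulting finite morphism \(Y^\prime \to X_{\bp}\), transferring movability and the negatively intersected curves \(C_n\). Your write-up merely makes explicit some details the paper leaves implicit, namely the fibre-product description of \(Y^\prime\), the separability issue in positive characteristic, and the density argument showing \(\Bm(D)\) cannot be closed.
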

\begin{proof}
  Fix a separable finite map \(s : Y \to \P^3\), and let \(\bp\) be a
  very general set of \(9\) points in \(\P^3\), none of which is
  contained in the branch locus of \(s\). Take the \(q_i\) to be the
  preimages of these \(9\) points under \(s\), so there is a map
  \(s^\prime : Y^\prime \to X_\bp\). If \(\dinf\) is the divisor of
  the previous theorem, then \(s^{\prime\ast}\dinf\) is a movable
  divisor, which has negative intersections with the preimages of each
  of the curves \(C_n\).  As above, it follows that
  \(\Bm(s^{\prime\ast}\dinf)\) is a countable union of curves.
\end{proof}

Though the divisor \(\dinf\) is not big, a standard construction gives
a big \(\R\)-divisor on a smooth 4-fold with non-closed diminished
base locus.  Fix an embedding \(X \to \P^N\), let \(CX \subset
\P^{N+1}\) be the projective cone over \(X\), and take \(p : Y \to
CX\) the blow-up at the cone point.  The map \(p\) is birational with
a unique exceptional divisor \(E \cong X\); write \(i_E : X \to Y\)
for the inclusion.  The variety \(Y\) has the structure of a
\(\P^1\)-bundle \(q : Y \cong \P_X(\cO_X \oplus \cO_X(1)) \to X\).
\begin{lemma}
\label{bignonclosed}
There exists a big \(\R\)-divisor \(\dinf^\prime\) on \(Y\) with
\(\Bm(\dinf^\prime)\) a countable union of curves.
\end{lemma}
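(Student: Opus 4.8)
I want to produce a big $\R$-divisor on $Y = \P_X(\cO_X \oplus \cO_X(1))$ whose diminished base locus is a countable union of curves, reusing the fact that $\dinf$ on $X$ has this property. The natural candidate is a combination of the pullback $q^\ast \dinf$ (which is movable but not big, since $q$ is a $\P^1$-bundle) and a divisor that is positive in the fiber direction to make the sum big. Since $Y$ carries the tautological class of the projectivization together with the relatively ample structure coming from $\cO_X(1)$, I expect $\dinf^\prime$ to have the shape $q^\ast \dinf + t\, \xi$ for a suitable relatively ample class $\xi$ and small rational $t > 0$, or equivalently $q^\ast(\dinf + s\, \cO_X(1)) + t\,\xi$ for appropriate $s, t$.

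First I would fix the divisor theory of $Y$. Writing $E$ for the exceptional divisor of $p : Y \to CX$ (which is a section of $q$ with $E \cong X$) and $H_Y$ for $p^\ast$ of the hyperplane class on $CX$, one has $N^1(Y) = \R\, E \oplus q^\ast N^1(X)$, and $\cO_Y(E)$ restricts to $E$ as $\cO_X(-1)$ (the conormal of the cone point blow-up). I would record that a class of the form $q^\ast \dinf + t\, H_Y$ is big for all small $t > 0$: bigness follows because $H_Y$ is big and nef on $Y$ (it is the pullback of the ample generator from the cone, which is big and nef since $p$ is birational), and $q^\ast \dinf$ is pseudoeffective, so their sum is big. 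Setting $\dinf^\prime = q^\ast \dinf + t\, H_Y$ for a fixed small rational $t$ gives a big $\R$-divisor.

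Next I would compute $\Bm(\dinf^\prime)$. By part (4) of Lemma~\ref{bminusbasic}, $\Bm(\dinf^\prime) \subseteq \Bm(q^\ast \dinf) \cup \Bm(t\, H_Y)$. Since $H_Y$ is nef, $\Bm(t\, H_Y) = \emptyset$ by part (2), so $\Bm(\dinf^\prime) \subseteq \Bm(q^\ast \dinf)$. By part (5), $\Bm(q^\ast \dinf) = q^{-1}(\Bm(\dinf))$, which is a countable union of surfaces (the preimages of the curves $C_n$), and hence not closed. For the reverse containment I would exhibit actual negative curves: the curves $C_n \subset X \cong E \subset Y$ sit inside the section $E$, and I would check that $\dinf^\prime \cdot \tilde{C}_n < 0$ for the copies $\tilde C_n$ of $C_n$ living in the section $E$, using that $H_Y|_E$ and $q^\ast\dinf|_E$ restrict to controllable classes on $X$. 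Concretely, $q|_E$ is an isomorphism, so $q^\ast \dinf \cdot \tilde C_n = \dinf \cdot C_n < 0$, and I need the $H_Y$-contribution to be small enough not to overwhelm this; since the degrees $\dinf \cdot C_n$ decay like $\lambda^{-n}$ while the $H_Y$-degrees may grow, this is the delicate point.

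Indeed, the \textbf{main obstacle} is precisely this numerical check: to conclude that each $C_n$ (or a curve mapping to it) lies in $\Bm(\dinf^\prime)$ via part (3) of Lemma~\ref{bminusbasic}, I need a curve class with strictly negative $\dinf^\prime$-degree, and the $t\, H_Y$ term has nonnegative degree on every curve. I expect to resolve this not by working with the curves in the section $E$ — where $H_Y$ has positive degree — but rather with the fiber-curves or with curves that avoid $H_Y$-positivity. The clean fix is to take the curves $C_n^\prime \subset Y$ to be copies of $C_n$ lying in a fiber-type locus on which $H_Y$ has degree zero, or to observe directly that $q^\ast \dinf$ already has negative degree on $q^{-1}(C_n)$-curves contracted to a point while $H_Y$, being the pullback of a class that is trivial on the exceptional fibers contracted by $p$, has degree zero there; then $\dinf^\prime \cdot C_n^\prime = q^\ast\dinf \cdot C_n^\prime < 0$, and part (3) applies. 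I would check the intersection numbers $H_Y \cdot C_n^\prime = 0$ explicitly to certify that these are genuine negative curves, and conclude that $\Bm(\dinf^\prime) \supseteq \bigcup_n C_n^\prime$ is a countable, non-closed union of curves, completing the proof.
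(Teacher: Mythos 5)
Your candidate divisor is essentially the paper's own (there $\dinf^\prime = p^\ast H + q^\ast\dinf$ with $H$ ample on $CX$), and your bigness argument is fine, but there is a genuine gap in the upper bound, and your choice of a \emph{small} coefficient $t$ points in exactly the wrong direction. All you establish is $\Bm(\dinf^\prime) \subseteq \Bm(q^\ast\dinf) = q^{-1}(\Bm(\dinf))$, which is a countable union of \emph{surfaces}; nothing in your argument excludes that $\Bm(\dinf^\prime)$ contains some of the surfaces $q^{-1}(C_n)$, and your closing sentence only proves the reverse inclusion $\Bm(\dinf^\prime) \supseteq \bigcup_n C_n^\prime$. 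That gives non-closedness, but not the statement, which is that $\Bm(\dinf^\prime)$ \emph{is} a countable union of curves. The missing idea is to take the coefficient of the pulled-back ample class \emph{large}, not small: since $p^\ast h = q^\ast\cO_X(1) + E$ (where $h$ is the hyperplane class of $CX$), one has
\[
q^\ast\dinf + m\,p^\ast h \;=\; q^\ast\bigl(\dinf + m\,\cO_X(1)\bigr) + mE ,
\]
and for $m \gg 0$ the class $\dinf + m\,\cO_X(1)$ is ample, while $\Bm(mE) \subseteq E$; so parts (2) and (4) of Lemma~\ref{bminusbasic} force $\Bm(\dinf^\prime) \subseteq E$. (This is what the paper encodes in ``choosing $H$ sufficiently large, the base locus is contained in $E$.'') Intersecting the two containments then gives $\Bm(\dinf^\prime) \subseteq q^{-1}(\Bm(\dinf)) \cap E = i_E(\Bm(\dinf))$, a countable union of curves. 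With your small $t$, the class $\dinf + t\,\cO_X(1)$ is not nef, this trapping argument is unavailable, and the upper bound never improves from surfaces to curves.

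Separately, the ``delicate point'' you worry about in the lower bound is a non-issue, and your geometry is backwards: $p$ contracts the section $E$ to the cone point, so $H_Y = p^\ast h$ has degree \emph{zero} on every curve contained in $E$ (it has positive degree on the $q$-fibers, which map to ruling lines of the cone — the opposite of what you assert). Hence the curves to use are precisely the copies $C_n^\prime = i_E(C_n) \subset E$ that you initially discard: $\dinf^\prime \cdot C_n^\prime = \dinf \cdot C_n < 0$ no matter how large the coefficient of $p^\ast h$ is, so taking that coefficient large costs nothing. Indeed, the curves you eventually describe (``curves in $q^{-1}(C_n)$ contracted by $p$'') are exactly these curves in $E$, contradicting your earlier claim that one must avoid the section. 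Once the coefficient is taken large and the curves are identified correctly, the proof closes along the paper's lines.
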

\begin{proof}
  Let \(H\) be an ample divisor on \(CX\) with support disjoint from
  the cone point, and set \(\dinf^\prime = p^\ast H + q^\ast \dinf\).
  Choosing \(H\) sufficiently large, we may assume the base locus of
  \(D_\lambda^\prime\) is contained in \(E\). Observe that
  \(D_\lambda^\prime\) is the sum of a big divisor and a
  pseudoeffective one, and thus big.  

  Properties (2), (4), and (5) of Lemma~\ref{bminusbasic} imply that
  \(\Bm(\dinf^\prime) \subseteq \Bm(p^\ast H) \cup \Bm(q^\ast \dinf) =
  \Bm(q^\ast \dinf) = q^{-1}\Bm(\dinf)\).  Furthermore, the choice of
  \(H\) implies that \(\Bm(\dinf^\prime) \subseteq E\), and so
  \(\Bm(\dinf^\prime) \subseteq q^{-1}\Bm(\dinf) \cap E\), which is a
  countable union of curves.  Moreover, each curve \(C_j^\prime =
  i_E(C_j)\) has \(C_j^\prime \cdot \dinf^\prime = q(C_j^\prime) \cdot
  \dinf < 0\), and so \(C_j^\prime \subset \Bm(\dinf^\prime)\). It
  follows that \(\Bm(D_\lambda^\prime)\) is a countable union of
  curves, all contained in \(E\).
\end{proof}

\section{Zariski Decomposition of \(\dinf\)}
\label{zariski}

The non-closedness of \(\Bm(\dinf)\) further implies that \(\dinf\)
admits no Zariski decomposition in several standard senses.  Recall
the form of decomposition in dimension two:
\begin{theorem}[Zariski decomposition theorem, e.g.\ \cite{prokhorov}]
\label{zdecomp}
  Let \(D\) be a pseudoeffective \(\R\)-divisor on a smooth projective
  surface \(X\). There exists an effective divisor \(N = \sum_i a_i
  N_i\) such that \(P = D - N\) is nef, \((N_i \cdot N_j)\) is
  negative definite, and \(P \cdot N_i = 0\).
\end{theorem}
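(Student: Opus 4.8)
The single structural input I would use is the Hodge index theorem, which says that the intersection form on $N^1(X)_{\R}$ has signature $(1,\rho-1)$; equivalently, a subspace spanned by divisor classes is negative definite for the form exactly when it contains no nonzero class of nonnegative self-intersection. The plan is to produce $N$ as the negative part of a \emph{maximal} admissible configuration of curves, arranged so that the three conclusions fall out of the linear equations defining that configuration, and to isolate one negativity lemma that carries all the geometry.

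The negativity lemma I would prove first is the following: if $C_1,\dots,C_k$ are distinct irreducible curves whose Gram matrix $M=(C_i\cdot C_j)$ is \emph{not} negative definite, then some nonzero effective combination $Z=\sum b_i C_i$ (with all $b_i\ge 0$) is nef. The proof is via Perron--Frobenius applied to the structure of $M$: because distinct irreducible curves meet nonnegatively, $M$ has nonnegative off-diagonal entries, so its largest eigenvalue admits a nonnegative eigenvector $b$, and failure of negative definiteness forces that eigenvalue to be $\ge 0$. Then $Z=\sum b_i C_i$ is effective, satisfies $Z\cdot C_i\ge 0$ for each component $C_i$, and meets every other irreducible curve nonnegatively because it is effective; hence $Z$ is nef (and $Z^2\ge 0$). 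This lemma is the engine of the whole argument, and I expect its Perron--Frobenius step --- upgrading a mere nonnegative-self-intersection class to an effective \emph{nef} one --- to be the conceptual crux.

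For the construction, I would call a finite set $S=\{C_1,\dots,C_k\}$ of distinct irreducible curves \emph{admissible} if $M$ is negative definite, the unique solution $N_S=\sum a_iC_i$ of the linear system $(D-N_S)\cdot C_i=0$ (which exists since $M$ is invertible) is effective, and $D-N_S$ is pseudoeffective. The empty set is admissible, and since a negative-definite set of independent classes has at most $\rho-1$ elements, I may choose an admissible $S$ of maximal cardinality. Put $N=N_S$ and $P=D-N$. By construction $N$ is effective, its support $\{N_i\}$ has negative definite Gram matrix, and $P\cdot N_i=0$ for all $i$; thus conclusions (ii) and (iii) hold, and it remains only to prove that $P$ is nef.

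Nefness is where the maximality is exploited, and I expect the bookkeeping here to be the main obstacle. Suppose $P\cdot C<0$ for some irreducible curve $C$; then $C$ is none of the $N_i$, and I would show that $S\cup\{C\}$ is again admissible, contradicting maximality. For negative-definiteness: if $S\cup\{C\}$ failed to be negative definite, the lemma would supply a nonzero effective nef $Z$ supported on $S\cup\{C\}$, and pairing with the pseudoeffective class $P$ gives $0\le P\cdot Z=b_C\,(P\cdot C)$; since $P\cdot N_i=0$ this forces $b_C\le 0$, hence $b_C=0$, so $Z$ is a nonzero nef class supported on the negative-definite set $S$ with $Z^2\ge 0$, which is impossible. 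For effectivity of $N_{S\cup\{C\}}$ and preservation of pseudoeffectivity of $D-N_{S\cup\{C\}}$: here I would use that $-M$ is a Stieltjes matrix, so $M^{-1}$ has nonpositive entries and the coefficients respond monotonically, forcing the new coefficient on $C$ to be strictly positive (because $P\cdot C<0$) while the old coefficients only increase. Securing this last admissibility check --- in particular that $D-N_{S\cup\{C\}}$ remains pseudoeffective --- is the delicate part of the argument; once it is in place, maximality is contradicted, $P$ is nef, and all three conclusions are established. Uniqueness of the decomposition, though not asserted, would follow by applying the lemma to the difference of two candidate negative parts.
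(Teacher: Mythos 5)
First, a point of comparison: the paper does not prove this statement at all --- it quotes it as a classical theorem (Zariski, Fujita) with a pointer to Prokhorov's survey --- so your argument can only be measured against the standard proofs. Your architecture (a Perron--Frobenius negativity lemma, a maximal ``admissible'' negative-definite configuration, Stieltjes-matrix monotonicity for the coefficients) is a genuine and essentially viable route, and the parts you carried out are correct: the lemma itself is right (shift the Gram matrix by $cI$ to apply Perron--Frobenius; the resulting effective class $Z$ satisfies $Z \cdot C_i = \mu b_i \geq 0$ and meets every other irreducible curve nonnegatively, hence is nef); the Hodge-index bound that admissible sets have at most $\rho - 1$ elements makes ``maximal admissible set'' meaningful; negative-definiteness of $S \cup \{C\}$ does follow by pairing $Z$ against the pseudoeffective class $P$; and the Schur-complement/Stieltjes computation does give a strictly positive new coefficient and nondecreasing old ones.

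But there is a genuine gap, and it is exactly the step you flag and defer: you never show that $D - N_{S \cup \{C\}}$ is pseudoeffective, and without this the maximality contradiction does not close. Condition (c) cannot be dropped from your definition of admissibility, because your own negative-definiteness argument consumes it (you pair $Z$ with the pseudoeffective $P = D - N_S$), so it must be re-verified for $S \cup \{C\}$. This verification is not linear algebra: Stieltjes monotonicity says the coefficients increase, i.e.\ that you are subtracting a \emph{larger} effective divisor from $D$, which by itself pushes you \emph{out of} the pseudoeffective cone rather than keeping you in it. Filling the gap requires a geometric input that appears nowhere in your proposal, for instance: if $T$ is pseudoeffective and $T \cdot C < 0$ for an irreducible curve $C$, then $T - tC$ is pseudoeffective for $0 \leq t \leq (T \cdot C)/C^2$ (proved by representing the big classes $T + \delta A$ by effective divisors, bounding below the multiplicity of $C$ in them, and letting $\delta \to 0$); one must then iterate this absorption over the components of $N_{S \cup \{C\}} - N_S$, using negative definiteness to show the leftover tends to zero, and conclude by closedness of the pseudoeffective cone. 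This absorption step is precisely where the pseudoeffective case is harder than Zariski's original effective case --- it is the content of Fujita's extension --- so deferring it means the proof is missing its one essential geometric idea, even though the surrounding skeleton is sound and, with that lemma supplied, would yield a complete and reasonably elegant proof.
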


There are several analogues of Zariski decompositions for divisors on
higher-dimensional varieties, imposing conditions which ensure the
retention of useful properties of the two-dimensional version.  One
decomposition which always exists and has proved important is the
divisorial Zariski decomposition of a pseudoeffective \(\R\)-divisor
\(D\), due to Nakayama.
\begin{definition}[\cite{nakayama}]
  Suppose that \(D\) is an \(\R\)-divisor. For a prime divisor \(E\)
  on \(X\), let
\[
\sigma_E(D) = \sup_{\textrm{$A$ ample}}  \left( \min_{D^\prime
     \equiv_{\mathrm{num}} D + A } \ord_E(D^\prime) \right).
\]
Set \(N_\sigma(D) = \sum_E \sigma_E(D) \cdot E\), and \(P_\sigma(D) =
D - N_\sigma(D)\). This is a finite sum, and \(P_\sigma(D) \in
\Movb(X)\).  When \(D\) is a big \(\Q\)-divisor, in fact \(\sigma_E(D)
= \min_{D^\prime \equiv_{\text{num}} D} \ord_E(D^\prime)\).
\end{definition}
In dimension two, this coincides with the standard Zariski
decomposition, but in higher dimensions \(P_\sigma(D)\) is only
movable and not in general nef.  To obtain a closer analogue of the
Zariski decomposition, given a pseudoeffective \(\R\)-divisor on a
smooth variety \(X\), one might ask for a birational modification \(f
: Y \to X\) and a decomposition \(f^\ast D = P + N\), with \(P\)
nef and \(N\) effective.  This is termed a \emph{weak Zariski
  decomposition} by Birkar~\cite{birkar2009}.  One might additionally
ask that:
\begin{enumerate}
\item CKM: the maps \( H^0(Y,\cO_Y(\floor{mP})) \to
  H^0(Y,\cO_Y(\floor{mf^\ast D}))\) are all isomorphisms.
\item Fujita: if \(g : Y^\prime \to Y\) is birational, and \(P^\prime
  \leq g^\ast f^\ast D\) is nef, then \(P^\prime \leq g^\ast P\).
\item Nakayama: \(P = P_\sigma(f^\ast D)\) is the positive part of the
  divisorial Zariski decomposition.
\end{enumerate}
Each of these seeks to extend a property of the usual two-dimensional
Zariski decomposition to the higher-dimensional setting.  The
survey~\cite{prokhorov} of Prokhorov introduces the important
properties of these and other higher-dimensional versions of the
Zariski decomposition.  Nakayama constructed an example of an
\(\R\)-divisor on a \(\P^2\)-bundle over an abelian surface which
admits no Zariski decomposition any of these three
senses~\cite{nakayama}.  However, the divisor of Nakayama's example is
itself big, thus effective, and trivially admits a weak Zariski
decomposition. However, we show \(\dinf\) does not admit a weak
Zariski decomposition, and that \(\dinf^\prime\) of
Lemma~\ref{bignonclosed} is another example of big divisor with no
decomposition in the sense of Nakayama.

\begin{lemma}[$=$ Theorem~\ref{main}, (iii)]
\label{nowzd}
\(\dinf\) does not admit a weak Zariski decomposition, and
\(\dinf^\prime\) does not admit a Zariski decomposition in the sense
of Nakayama.
\end{lemma}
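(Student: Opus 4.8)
The plan is to treat the two assertions separately, deriving each from the non-closedness of the relevant diminished base locus (Lemmas~\ref{negcurves} and~\ref{bignonclosed}) together with the elementary properties collected in Lemma~\ref{bminusbasic}. For \(\dinf\) I would argue by contradiction. Suppose there were a birational model \(f : Y \to X\), which I may assume smooth, and a numerical decomposition \(f^\ast \dinf \equiv_{\mathrm{num}} P + N\) with \(P\) nef and \(N\) effective; since \(\Bm\) depends only on the numerical class, this is harmless. Property (4) of Lemma~\ref{bminusbasic} gives \(\Bm(f^\ast\dinf) \subseteq \Bm(P) \cup \Bm(N)\). Nefness of \(P\) makes \(\Bm(P)=\emptyset\) by (2), while for the effective divisor \(N\) one checks straight from the definition that \(\Bm(N) \subseteq \Supp N\): for any ample \(A\) and \(m \gg 0\), the base-point-free sections of \(mA\) multiplied by the section cutting out \(mN\) show \(\Bs(m(N+A)) \subseteq \Supp N\). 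Thus \(\Bm(f^\ast\dinf)\) would sit inside the codimension-one closed set \(\Supp N\).

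The contradiction then comes from the negative curves. Writing \(\tilde C_n \subset Y\) for the strict transform of the curve \(C_n\) of Lemma~\ref{negcurves}, the projection formula gives \(f^\ast\dinf \cdot \tilde C_n = \dinf \cdot C_n < 0\), so \(\tilde C_n \subseteq \Bm(f^\ast\dinf)\) by property (3). As the \(C_n\) are Zariski dense in \(X\) and \(f\) is birational, the \(\tilde C_n\) are Zariski dense in the irreducible threefold \(Y\); they therefore cannot all lie in the proper closed subset \(\Supp N\). This rules out a weak Zariski decomposition of \(\dinf\).

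For \(\dinf^\prime\) the same crude argument fails, since the negative curves \(C_j^\prime = i_E(C_j)\) of Lemma~\ref{bignonclosed} all lie in the single divisor \(E\), so that \(\Bm(\dinf^\prime)\) is far from dense in \(Y\). Here I would instead exploit the defining feature of the Nakayama decomposition. Suppose \(g : Z \to Y\), with \(Z\) smooth, were a model on which \(P_\sigma(g^\ast\dinf^\prime)\) is nef, giving a Zariski decomposition \(g^\ast\dinf^\prime = P_\sigma(g^\ast\dinf^\prime) + N_\sigma(g^\ast\dinf^\prime)\) in Nakayama's sense. As before, nefness of \(P_\sigma\) and effectivity of \(N_\sigma\) force \(\Bm(g^\ast\dinf^\prime) \subseteq \Supp N_\sigma(g^\ast\dinf^\prime)\). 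Conversely, each prime component \(E^\prime\) of \(\Supp N_\sigma(g^\ast\dinf^\prime)\) has \(\sigma_{E^\prime}(g^\ast\dinf^\prime) > 0\), hence \(E^\prime \subseteq \Bm(g^\ast\dinf^\prime)\) by the basic property relating \(N_\sigma\) to the divisorial part of the diminished base locus~\cite{nakayama}. Combining the two inclusions yields the equality \(\Bm(g^\ast\dinf^\prime) = \Supp N_\sigma(g^\ast\dinf^\prime)\), a \emph{closed} set.

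This is the crux, and the main obstacle: unlike the case of \(\dinf\), containment of \(\Bm\) in a divisor is not by itself contradictory, and one must use that the nef Nakayama part forces \(\Bm\) to coincide with the closed divisor \(\Supp N_\sigma\). The resulting closedness is untenable. By property (5) of Lemma~\ref{bminusbasic}, \(\Bm(g^\ast\dinf^\prime) = g^{-1}(\Bm(\dinf^\prime))\), and Lemma~\ref{bignonclosed} exhibits \(\Bm(\dinf^\prime)\) as a countable union of curves containing the \(C_j^\prime\), which are Zariski dense in \(E \cong X\) because the \(C_j\) are dense in \(X\). Since the ground field is uncountable, the threefold \(E\) is not a countable union of curves, so \(\Bm(\dinf^\prime)\) is properly contained in its closure \(E\) and is not closed; as \(g\) is proper and surjective, \(g^{-1}(\Bm(\dinf^\prime))\) is then not closed either. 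This contradicts the displayed equality, so no such model \(Z\) exists and \(\dinf^\prime\) admits no Zariski decomposition in the sense of Nakayama.
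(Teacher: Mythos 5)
Your proof is correct; it reaches the same two contradictions as the paper but packages them differently. For the weak Zariski decomposition of \(\dinf\), the paper argues directly with intersection numbers: choosing curves \(\tilde C_n \subset Y\) mapping finitely onto the dense negative curves \(C_n\), only finitely many can lie in \(\Supp N\), and any other satisfies \(0 \le P\cdot\tilde C_n + N\cdot\tilde C_n = f^\ast\dinf\cdot\tilde C_n = d_n(\dinf\cdot C_n) < 0\). You run the identical density-versus-\(\Supp N\) contradiction through the \(\Bm\) formalism instead; this is equivalent, but it costs you the auxiliary fact \(\Bm(N)\subseteq\Supp N\) for effective \(N\), which is not among the properties listed in Lemma~\ref{bminusbasic} (your section argument proves it for \(\Q\)-divisors; for \(\R\)-divisors one first replaces \(N\) by a nearby effective \(\Q\)-divisor and absorbs the difference into the ample class --- standard, but worth saying). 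The genuine divergence is in the second claim: the paper disposes of the Nakayama decomposition of \(\dinf^\prime\) in one sentence by citing \cite{nakayama} for the implication that a divisor with non-closed \(\Bm\) admits no Zariski decomposition in the sense of Nakayama, whereas you prove that implication. Your argument --- on a model where \(P_\sigma(g^\ast\dinf^\prime)\) is nef, one has \(\Bm(g^\ast\dinf^\prime)\subseteq\Supp N_\sigma(g^\ast\dinf^\prime)\) by nefness plus effectivity, and \(\Supp N_\sigma(g^\ast\dinf^\prime)\subseteq\Bm(g^\ast\dinf^\prime)\) because every component of \(N_\sigma\) has positive \(\sigma\), so \(\Bm(g^\ast\dinf^\prime)\) is closed, and closedness descends along the proper surjective \(g\) --- is exactly the content of the cited fact, and you correctly identify that the reverse inclusion is indispensable here, since \(\Bm(\dinf^\prime)\) is in any case contained in the divisor \(E\) and mere containment in a closed set proves nothing. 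You also supply the verification (used implicitly by the paper as well) that \(\Bm(\dinf^\prime)\) is indeed not closed: the curves \(C_j^\prime\) are dense in \(E\), while a countable union of curves cannot exhaust a threefold over an uncountable field. In short: for the first assertion your route is the paper's in \(\Bm\)-language and slightly less economical; for the second, your proof makes self-contained what the paper delegates to Nakayama, at the price of invoking the standard characterization of the components of \(N_\sigma\) as the divisorial part of \(\Bm\).
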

\begin{proof}
  Suppose that \(f^\ast \dinf = P + N\) where \(N\) is effective.  For
  each \(n\), pick a curve \(\tilde{C}_n\) on \(Y\) mapping finitely
  to \(C_n\), and let \(d_n = \deg(\tilde{C}_n \to C_n)\).  Only
  finitely many of the \(\tilde{C}_n\) are contained in \(\Supp N\),
  since these curves are Zariski dense.  On the other hand, for any
  curve \(\tilde{C}_n\) not contained in \(\Supp N\), we have
  \(\tilde{C}_n \cdot N \geq 0\), and so compute \(d_n (\dinf \cdot
  C_n) = f^\ast \dinf \cdot \tilde{C}_n = P \cdot \tilde{C}_n + N
  \cdot \tilde{C}_n \geq 0\), a contradiction.  Similarly, the
  non-closedness of \(\Bm(\dinf^\prime)\) implies this divisor does
  not admit a Zariski decomposition in the sense of
  Nakayama~\cite[pg.\ 28]{nakayama}.
\end{proof}

\section{Acknowledgments}

I am indebted to my advisor, James M\textsuperscript{c}Kernan, for
many useful discussions and comments, and to the anonymous referees,
who suggested some substantial improvements.  Thanks also to Mihai
Fulger, Mircea Musta{\c{t}}{\u{a}}, and Rob Lazarsfeld for helpful
suggestions, and to Igor Dolgachev, who kindly directed me to a number
of useful sources on the Cremona action.  I also benefited greatly
from discussions with Roberto Svaldi and Tiankai Liu.


\bibliographystyle{amsplain}
\bibliography{zrefs}

\end{document}